\title{Hodge structures of type $(n,0,\ldots,0,n)$}
\author{Burt Totaro}
\date{  }
\def\Z{\text{\bf Z}}
\def\Q{\text{\bf Q}}
\def\R{\text{\bf R}}
\def\C{\text{\bf C}}
\def\P{\text{\bf P}}
\def\HH{\text{\bf H}}
\def\arrow{\rightarrow}
\def\inj{\hookrightarrow}
\def\Gal{\text{Gal}}
\def\End{\text{End}}
\def\Gr{\text{Gr}}
\def\isot{\text{isot}}
\def\QHS{\Q\text{-HS}}
\def\tr{\text{tr}}
\def\op{\text{op}}
\def\MT{\text{MT}}
\def\mt{\mathfrak{mt}}
\def\so{\mathfrak{so}}
\def\su{\mathfrak{su}}
\def\Sym{\text{Sym}}
\def\Alt{\text{Alt}}
\def\dim{\text{dim}}
\def\Iso{\text{Iso}}
\def\Nrd{\text{Nrd}}
\def\Lag{\text{Lag}}
\def\ba{\overline{\phantom{x}}}
\def\disc{\text{disc}}
\begin{document}
\maketitle
\newtheorem{theorem}{Theorem}[section]
\newtheorem{corollary}[theorem]{Corollary}
\newtheorem{lemma}[theorem]{Lemma}
 
\theoremstyle{definition}
\newtheorem{definition}[theorem]{Definition}
\newtheorem{example}[theorem]{Example}

\theoremstyle{remark}
\newtheorem{remark}[theorem]{Remark}

Completing earlier work by Albert, Shimura found
all the possible endomorphism algebras (tensored
with the rationals) for complex abelian varieties of a given dimension
\cite[Theorem 5]{Shimura}. In five exceptional cases, every abelian variety
on which
a certain algebra acts has ``extra endomorphisms'', so that the full
endomorphism algebra is bigger than expected.

Complex abelian varieties $X$ up to isogeny are equivalent to polarizable
$\Q$-Hodge structures of weight 1, with Hodge numbers $(n,n)$ (where
$n$ is the dimension of $X$). In this paper, we generalize Shimura's
classification to determine all the possible endomorphism algebras
for polarizable $\Q$-Hodge structures of type $(n,0,\ldots,0,n)$.
For Hodge structures of odd weight, the answer is the same as
for abelian varieties. For Hodge structures of even weight, the answer
is similar but not identical. The proof combines ideas from Shimura
with Green-Griffiths-Kerr's approach to computing Mumford-Tate
groups \cite[Proposition VI.A.5]{GGK}.

As with abelian varieties, the most interesting feature of the classification
is that in certain cases, every Hodge structure
on which a given algebra acts
must have extra endomorphisms. (Throughout this discussion,
we only consider polarizable Hodge structures.)
One known case (pointed out to me by
Beauville) is that every $\Q$-Hodge
structure of type $(1,0,1)$ has endomorphisms by an imaginary
quadratic field and hence is of complex multiplication (CM) type,
meaning that its Mumford-Tate group is commutative. More generally,
every $\Q$-Hodge structure of type $(n,0,n)$ with endomorphisms
by a totally real field $F$ of degree $n$ has endomorphisms
by a totally imaginary quadratic extension field of $F$, and hence
is of CM type.
Another case, which seems to be new,
is that a $\Q$-Hodge structure $V$ of type $(n,0,n)$ with endomorphisms
by a CM field $F_0$ of degree $n$ must have endomorphisms by a quaternion
algebra over the totally real subfield $F$ of $F_0$. In this case, $V$
need not be of CM type; there is a period space isomorphic
to $(\C\P^1)^{[F:\Q]}$
of Hodge structures of this type, whereas there are only countably
many Hodge structures of CM type.

To motivate the results of this paper on endomorphism algebras,
consider the geometric origin of Hodge structures.
A Hodge structure {\it comes from geometry }if it is a 
summand of the cohomology
of a smooth complex projective variety defined by an algebraic
correspondence.
Griffiths found (``Griffiths transversality'') that a family
of Hodge structures
coming from geometry can vary only in certain directions, expressed
by the notion
of a variation of Hodge structures \cite[Theorem 10.2]{Voisinbook}.
In particular, any variation of Hodge structures of type $(n,0,\ldots,0,n)$
of weight at least 2 (so there is at least one $0$)
is locally constant; more generally, this holds whenever there are no two
adjacent nonzero Hodge numbers. This has the remarkable consequence
that only countably many Hodge structures of type $(n,0,\ldots,0,n)$
of weight at least 2
come from geometry. Very little is known about this countable subset
of the period domain of all Hodge structures.

One way to produce a Hodge structure of type $(n,0,n)$ is from
a smooth complex projective surface
$X$ with maximal Picard number, meaning that the Picard number is equal
to $h^{1,1}(X)$. (Then $H^2(X,\Q)$ modulo the subspace of Hodge classes,
or equivalently of divisors, is a Hodge structure of type $(p_g(X),0,p_g(X))$
that comes from geometry.) A recent survey of surfaces with maximal
Picard number is Beauville \cite{Beauville}. Many of these examples
give Hodge structures of CM type. For example, since
all $\Q$-Hodge structures of type $(1,0,1)$ are of CM type,
all complex
K3 surfaces with Picard number 20 give Hodge structures (with Hodge numbers
$(1,0,1)$) that are of CM type.

Thus, one might ask whether all Hodge structures of type $(n,0,n)$
that come from geometry are of CM type. The answer is almost certainly
no. Indeed,
a classical modular form $f$ (more precisely, a normalized eigenform)
of weight $k\geq 2$ and level $N$
determines a motive over
$\Q$ with coefficients in the field $E=\Q(f)$ of coefficients
of $f$ \cite{Scholl}. This motive has weight $k-1$
and Hodge numbers $(1,0,\ldots,0,1)$. In particular, a modular form
of weight 3 and some level $N$ determines an $E$-Hodge structure
with Hodge numbers $(1,0,1)$, and hence a $\Q$-Hodge structure
with Hodge numbers $(n,0,n)$, where $n=[E:\Q]$.
(Explicitly, this motive occurs
in $H^2$ of the elliptic modular surface of level $N$.)

Ribet explained how to check from the coefficients of a modular form
whether the associated Galois representation $\Gal(\overline{\Q}/\Q)
\arrow GL(2,E\otimes_{\Q}\Q_l)$
is of CM type, meaning that the image of the representation
has an open abelian subgroup \cite{Ribet}. From Stein's tables of modular
forms, one can read off many forms which are not of CM type,
such as the unique newform of weight 3 and level 9, with $E=\Q(\sqrt{-3})$
\cite{Stein}.
It would follow from the Hodge conjecture,
or from the weaker conjecture that every Hodge cycle is absolute
Hodge, that the associated $E$-Hodge structure of type $(1,0,1)$
is not of CM type.
Without conjectures, it is an attractive problem, not addressed
here, to show that
this Hodge structure (which comes from
geometry) is not of CM type. The problem amounts to proving the irrationality
of a suitable period of the given modular form.

All this motivates the topic of this paper: the unexpected
symmetries of Hodge structures of type $(n,0,\ldots,
0,n)$ for a number
field $E$. Several examples of ``extra'' endomorphisms
in our classification
were suggested by Ribet's analysis of the Galois representation
associated to a modular form; in those cases, the extra
endomorphisms come from algebraic cycles \cite{Ribet}.
In particular, the quaternionic
structure on the motive of a form of odd weight
comes from the outer automorphism of the group $\Gamma_1(N)$
given by the ``$W$-operator" $W=\begin{pmatrix}0& -1\\ N& 0\end{pmatrix}$.
It would be interesting
to show that the extra endomorphisms of Hodge structures which
we construct come from
algebraic cycles for other Hodge structures of type $(n,0,\ldots,0,n)$
that come from geometry, as the Hodge conjecture would predict.

The results of this paper have some of the flavor of the Kuga-Satake
construction, which shows
that all polarizable $\Q$-Hodge structures of type $(1,b,1)$
are in the tensor category generated by the cohomology of curves (or,
equivalently, the cohomology of abelian varieties).
But in fact the situation of this paper is very different.
Namely, a $\Q$-Hodge structure of type $(n,0,\ldots,0,n)$ of weight at least
2 which is not of CM type
is not in the tensor category generated by the cohomology of curves
(Corollary \ref{notcurve}).

I thank Arnaud Beauville and Tony Scholl for useful conversations.
This work was supported by NSF grant DMS-1303105.

\section{Notation}
\label{notation}

A {\it $\Q$-Hodge structure }$V$
is a rational vector space of finite dimension together with a decomposition
of $V_{\C}:=V\otimes_{\Q}\C$ as a direct sum of complex linear subspaces
$V^{a,b}$ for integers $a,b$ such that $\overline{V^{a,b}}=V^{b,a}$
and such that the grading by $a+b$, called the weight grading,
is defined over $\Q$.
A reference on Hodge structures is Voisin \cite[Chapter 7]{Voisinbook}.
A Hodge structure is pure of {\it weight }$m$ if $V^{a,b}=0$ for
$a+b\neq m$.
Hodge structures can also be defined in terms of the {\it Hodge
filtration }$F^j(V_{\C})=\oplus_{a\geq j,b\in\Z} V^{a,b}$. A smooth
complex projective variety $X$ has a Hodge structure of weight
$m$ on $H^m(X,\Q)$, for any $m$. The {\it Tate Hodge structure }$\Q(j)$
for an integer $j$ is $V=\Q$ with $V_{\C}=V^{-j,-j}$.

A {\it polarization }of a $\Q$-Hodge structure $V$ of weight $m$
is a bilinear form $\langle, \rangle\colon V\times V\arrow \Q$
which is symmetric if $m$ is even, alternating if $m$ is odd,
and which satisfies the properties \cite[section 7.1.2]{Voisinbook}:\\
(i) $\langle V^{a,b},V^{a',b'}\rangle =0$ for $a'\neq m-a$;\\
(ii) $i^{a-b}(-1)^{m(m-1)/2}\langle x,\overline{x}\rangle >0$
for all nonzero elements $x$ of $V^{a,b}$.\\
Here we write $\langle ,\rangle$ for the complex bilinear form
on $V\otimes_{\Q} \C$ associated to the given form on $V$.
For example, an ample line bundle on a smooth complex projective
variety $X$ determines a polarization of $H^m(X,\Q)$ for all $m$.
The polarizable $\Q$-Hodge structures form a semisimple
abelian category.
In this paper, all the $\Q$-Hodge structures we consider
will be polarizable, unless stated otherwise.

A $\Q$-Hodge structure $V$ (ignoring the polarization)
can also be described as a $\Q$-vector
space with a homomorphism of $\R$-groups $R_{\C/\R}G_m\arrow GL(V_{\R})$
\cite[section 1.3]{Moonen99}. The {\it Mumford-Tate group }of a Hodge
structure $V$ is the $\Q$-Zariski closure of the image of
this homomorphism. The book \cite{GGK} uses ``Mumford-Tate group''
for a slightly smaller group which we call the {\it Hodge group}:
the $\Q$-Zariski closure of the circle group $\ker(N\colon R_{\C/\R}G_m
\arrow G_m)\arrow GL(V_{\R})$
\cite[section 1.11]{Moonen99}. For example, if $V$ is pure of nonzero
weight, then the Mumford-Tate group is the product in $GL(V)$
of the Hodge group with the group $G_m$ of scalars.
The Mumford-Tate group of a polarizable Hodge structure $V$
is a connected reductive
group over $\Q$; in a sense, it describes the complexity of a Hodge
structure.

For a polarized $\Q$-Hodge structure $V$, the endomorphism algebra
$A=\End_{\QHS}(V)$ is a $\Q$-algebra
with an involution $f\mapsto \overline{f}$ given by
$$\langle fx,y\rangle = \langle x,\overline{f}y\rangle.$$
This is called the {\it Rosati involution}. The Rosati involution
is {\it positive }in the sense that $A$ has finite dimension
as a $\Q$-vector space and the reduced trace
$\tr_{A/\Q}(x\overline{x})$ is positive
for all nonzero $x$ in $V$ \cite[Remark 1.20]{Moonen99}.
It follows, for example, that if $A$
is a field, then it must be either totally real or else 
a CM field (a totally imaginary quadratic extension
of a totally real number field), and the involution must be
complex conjugation.  A convenient reference
on algebras with positive involution, in connection
with endomorphisms of abelian varieties, is Mumford
\cite[section 21]{Mumford}.

For a central simple algebra $A$ over a field $F_0$ with involution $\ba$,
define
\begin{align*}
\Sym(A,\ba)&=\{x\in A: \overline{x}=x\}\\
\Alt(A,\ba)&=\{x\in A: \overline{x}=-x\}.
\end{align*}
Write $[L:F_0]=q^2$, and let $F$ be the subfield of $F_0$
fixed by the involution.
Following the Book of Involutions, the involution on $L$
is said to be {\it orthogonal }if $F_0=F$ and $\dim_{F}\Sym(L,\ba)
=q(q+1)/2$, {\it symplectic }if $F_0=F$ and $\dim_{F}\Sym(L,\ba)
=q(q-1)/2$, and {\it unitary }if $F_0\neq F$
\cite[Proposition I.2.6]{KMRT}. These are the only
possibilities.

By Albert, every division algebra $L$ with positive involution
falls into one of four types \cite[section 21]{Mumford}. 
Type I:
$L$ is equal to $F$, a totally real field.
Type II: $L$ is a totally indefinite
quaternion algebra over a totally real field $F$, with an orthogonal
involution. (``Totally indefinite''
means that $L$ is split at every real place of $F$.)
Type III: $L$ is a totally definite quaternion algebra over a totally
real field $F$, with a symplectic involution.
Type IV: $L$ is a central
simple algebra of degree $q$ over a CM field $F_0$, and the involution
on $F_0$ is complex conjugation.

Let $(,)$ be a symmetric bilinear form on a vector space $V$ 
of dimension $n$ over
a field $F$. The {\it determinant }of the form is
$$\det(V):=\det((e_i,e_j))_{1\leq i,j\leq n}\in F^*/(F^*)^2,$$
for any basis $e_1,\ldots,e_n$ for $V$. The {\it discriminant }
of the form is the signed determinant:
$$\disc(V)=(-1)^{n(n-1)/2}\det(V)\in F^*/(F^*)^2.$$
For a central simple algebra $B$ of even degree $n=2m$
over a field $F$ with orthogonal involution,
the Book of Involutions defines
the {\it determinant }as the reduced norm of any alternating unit,
$\det(B,\ba)=\Nrd^A_F(a)\in F^*/(F^*)^2$
for $a\in \Alt(A,\ba)\cap A^*$. The {\it discriminant }is the signed
determinant:
$$\disc(B,\ba)=(-1)^m\det(B,\ba)\in F^*/(F^*)^2.$$
For a vector space $V$ of even dimension over a field $F$ with a symmetric
bilinear form, the discriminant of $V$ is equal to the discriminant
of the adjoint involution on $\End_F(V)$.

Let $E$ be a number field. We define an
{\it $E$-Hodge structure }$V$ to be a $\Q$-Hodge
structure together with a homomorphism $E\arrow \End_{\QHS}(V)$
of $\Q$-algebras.

Let $K$ be a number field which is either totally real
or a CM field. Write $a\mapsto \overline{a}$ for the involution of $K$
given by complex conjugation, which is the identity
if $K$ is totally real. A {\it polarized $K$-Hodge structure }$V$ means
a polarized $\Q$-Hodge structure $V$ together with a homomorphism
$K\arrow \End_{\QHS}(V)$ of $\Q$-algebras with involution. That is,
the form $\langle,\rangle\colon V\times V\arrow \Q$ satisfies
$$\langle ax,y\rangle=\langle x,\overline{a}y\rangle$$
for all $a$ in $K$ and $x,y$ in $V$. There does not seem to be
a reasonable notion of a polarized $E$-Hodge structure for a number
field $E$ which is not totally real or a CM field, although one could
require the underlying $\Q$-Hodge structure to be polarizable.

Let $E$ be a totally real or CM field,
and let $V$ be a polarized $E$-Hodge structure of weight $m$.
Then there is a unique
$(-1)^m$-hermitian form $(,)\colon V\times V\arrow E$ such that
$\langle x,y\rangle=\tr^E_{\Q}(x,y)$. By a $(-1)^m$-hermitian
form, we mean that $(ax,y)=a(x,y)$, $(x,ay)=\overline{a}(x,y)$,
and $(x,y)=(-1)^m\overline{(y,x)}$ for $x,y\in V$ and $a\in E$;
thus $(,)$ is a bilinear form if $E$ is totally real.
The existence and uniqueness of $(,)$ follow by observing
that for $x,y$ in $V$, $(x,y)$ must be the unique element $u\in L$
such that $\langle ax,y\rangle=\tr^E_{\Q}(au)$ for all $a\in L$.
This uniquely determines $u$, because $a,b\mapsto \tr^L_{\Q}(ab)$
is a nondegenerate bilinear form on $L$ as a $\Q$-vector space.

A $\Q$-Hodge structure $V$ is of {\it CM type} if it is polarizable
and its Mumford-Tate
group is commutative. In particular, if there is a CM field $K$
such that $V$ is a $K$-Hodge structure and $\dim_KV=1$, then
$V$ is of CM type \cite[Proposition V.3]{GGK}. There
are only countably many isomorphism classes of Hodge structures of CM type.
They all come from geometry, in fact (up to Tate twists) from
the rational cohomology of abelian varieties with complex multiplication,
by Serre. More strongly, by Abdulali,
every effective Hodge structure
of CM type occurs in the cohomology of some abelian variety
with complex multiplication, with no Tate twist needed
\cite{Abdulali}.

We say that a $\Q$-Hodge structure $V$ is of {\it type }$(a_0,\ldots,a_m)$
if $V$ has weight $m$ and 
the only nonzero Hodge numbers are $\dim_{\C}V^{j,m-j}=a_j$
for $0\leq j\leq m$.
Let $V$ be a $\Q$-Hodge structure of weight 2 and type $(n,0,n)$,
the main
situation considered in this paper. Then the bilinear form
$\langle,\rangle$ on $V$ is positive definite. Conversely,
for a $\Q$-vector space $V$ of dimension $2n$
with a positive definite symmetric
bilinear form $\langle,\rangle$, a Hodge structure of type
$(n,0,n)$ on $(V,\langle,\rangle)$
is equivalent to an isotropic $\C$-linear subspace
$V^{2,0}\subset V\otimes_{\Q}\C$ of dimension $n$. (The positivity property
$\langle x,\overline{x}\rangle >0$ for nonzero $x$ in $V^{2,0}$
is automatic; in fact, $\langle x,\overline{x}\rangle >0$
for all nonzero $x$ in $V\otimes_{\Q}\C$.) Therefore, the period domain
of Hodge structures of type $(n,0,n)$ on $(V,\langle,\rangle)$
is the isotropic Grassmannian $\Gr_{\isot}(n,2n)$ over $\C$.

Let $E$ be a number field which is totally real or a CM field,
and let $r=[E:\Q]$.
Let $V$ be an $E$-Hodge structure. Then $V^{a,b}$ is an
$E\otimes_{\Q}\C$-module for each $a,b\in\Z$. The ring
$E\otimes_{\Q}\C$ is isomorphic to a product of copies of $\C$,
indexed by the embeddings $\sigma_1,\ldots,\sigma_r$ of $E$
into $\C$. Therefore, the complex vector space
$V^{a,b}$ splits as a direct sum indexed
by the embeddings $\sigma_l$ (the subspace where $E$ acts through its
embedding $\sigma_l$ in $\C$). 
We say that an $E$-Hodge structure $V$ is of {\it type }$(a_0,
\ldots,a_n)$ if, for each embedding $\sigma_l\colon E\inj \C$,
the summand of $V^{j,n-j}$ corresponding to $\sigma_l$
has complex dimension $a_j$. It follows that, as a $\Q$-Hodge
structure, $V$ has type $(ra_0,\ldots,ra_n)$.

In general, an $E$-Hodge structure need not be of a single
type $(a_0,\ldots,a_n)$
in this sense. For example, if $X$ is an elliptic curve with complex
multiplication by an imaginary quadratic field $K$, then $V=H^1(X,\Q)$ can be
viewed as a $K$-Hodge structure, of dimension 1 as a $K$-vector
space. This $K$-Hodge structure has Hodge numbers $(1,0)$ under
one complex embedding of $K$ and Hodge numbers $(0,1)$ under the conjugate
embedding.

\section{Polarizations}

\begin{lemma}
\label{polar}
Let $K$ be a number field which is either totally real
or a CM field. Let $V$ be a $K$-Hodge structure such that
the underlying $\Q$-Hodge structure is polarizable.
Then $V$ is polarizable as a $K$-Hodge structure.
\end{lemma}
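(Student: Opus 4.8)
The plan is to start from an arbitrary polarization $\psi_0$ of the underlying $\Q$-Hodge structure — which exists by hypothesis — and to modify it into one compatible with $K$. The naive attempt of simply reusing $\psi_0$ fails, since $\psi_0$ need not satisfy $\langle ax,y\rangle=\langle x,\overline{a}y\rangle$; the whole content of the lemma is that this can always be repaired, and the main point is conceptual rather than technical: one must not restrict but twist $\psi_0$ by the spectral idempotents of the $K$-action, and the only price paid, a temporary loss of $\Q$-structure, is then recovered for free.

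Concretely, I would first diagonalize the $K$-action over $\C$. Let $\sigma_1,\dots,\sigma_r\colon K\inj\C$ be the embeddings, let $e_1,\dots,e_r\in K\otimes_\Q\C$ be the associated idempotents, and write $V_\C=\bigoplus_l V_l$ with $V_l=e_lV_\C$ the summand on which $K$ acts through $\sigma_l$. Each $e_l$ is a polynomial in a primitive element of $K$, hence acts on $V_\C$ as a morphism of $\C$-Hodge structures, preserving the weight grading and the bigrading. Write $\overline{l}$ for the index with $\sigma_{\overline{l}}=\overline{\sigma_l}$; then complex conjugation on $V_\C$ interchanges $V_l$ and $V_{\overline{l}}$, and $e_{\overline{l}}\overline{v}=\overline{e_lv}$. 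Now set
$$\psi'(x,y):=\sum_{l=1}^{r}\psi_0\big(e_lx,\,e_{\overline{l}}y\big),\qquad x,y\in V_\C .$$
I claim $\psi'$ is the complexification of a $K$-compatible polarization of $V$, up to $\Q$-rationality. The verifications are short. First, $\overline{\psi'(x,y)}=\psi'(\overline{x},\overline{y})$ because $\psi_0$ is rational and conjugation swaps $V_l\leftrightarrow V_{\overline{l}}$, so $\psi'$ descends to a real form on $V_\R$. It has the symmetry required of a polarization because $\psi_0$ does, after reindexing $l\leftrightarrow\overline{l}$ in the sum. It satisfies condition (i) because $\psi_0$ does and the $e_l$ respect the bigrading. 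It is $K$-compatible: since $e_l(ax)=\sigma_l(a)e_lx$ on $V_l$ and $\sigma_{\overline{l}}(\overline{a})=\overline{\sigma_{\overline{l}}(a)}=\sigma_l(a)$, both $\psi'(ax,y)$ and $\psi'(x,\overline{a}y)$ equal $\sum_l\sigma_l(a)\psi_0(e_lx,e_{\overline{l}}y)$. Finally it satisfies the positivity (ii): for a nonzero $x\in V^{a,b}$ one computes $\psi'(x,\overline{x})=\sum_l\psi_0\big(e_lx,\overline{e_lx}\big)$, and each summand is the corresponding term for $\psi_0$ applied to $e_lx\in V^{a,b}$, hence has the sign demanded by (ii), with strict inequality for at least one $l$ since $x=\sum_l e_lx\neq0$.

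It remains to arrange rationality. The $\Q$-bilinear forms on $V$ that have the correct symmetry, satisfy condition (i), and are $K$-compatible form a $\Q$-subspace $\Sigma$, and the form $\psi'$ just constructed lies in $\Sigma\otimes_\Q\R$. The positivity condition (ii) is an open condition on forms, so it cuts out a nonempty open subset of $\Sigma\otimes_\Q\R$ (nonempty because it contains $\psi'$). Since $\Sigma$ is dense in $\Sigma\otimes_\Q\R$, this open subset contains a rational point $\psi\in\Sigma$, and $\psi$ is then a $\Q$-valued, correctly symmetric, $K$-compatible polarization of $V$. I expect no genuine obstacle beyond writing down the twisted form $\psi'$; everything else — the five properties of $\psi'$, the openness of positivity, the density argument — is routine. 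The one place where polarizability of the underlying $\Q$-Hodge structure is used is in producing $\psi_0$ and, through it, the positivity of $\psi'$.
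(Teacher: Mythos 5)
Your construction of $\psi'$ is sound, and in fact it is literally the paper's form written in a different basis: the tensor $\sum_l e_l\otimes e_{\overline{l}}$ is the canonical element of $K\otimes_{\Q}K$ attached to the trace form $\tr^K_{\Q}(a\overline{b})$, which the paper writes as $\sum_j f_j\otimes e_j$ for a rational basis and its trace-dual basis. Your five verifications (reality, symmetry, condition (i), $K$-compatibility, positivity) are all correct, and diagonalizing the $K$-action makes them transparent. The gap is in the final rationality step. You assert that $\psi'$ lies in $\Sigma\otimes_{\Q}\R$ and then invoke density of $\Sigma$ in $\Sigma\otimes_{\Q}\R$. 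The density statement is fine, but the membership assertion is exactly the rationality problem and nothing in your construction justifies it. The culprit is condition (i): it says the form is a morphism of Hodge structures from $V\otimes V$ to a Tate twist of $\Q$, i.e.\ a Hodge class in $(V\otimes V)^*$, and the real Hodge classes are in general \emph{not} the real span of the rational ones. Already for a generic polarizable weight-one Hodge structure of dimension $4$ with $K=\Q$, the rational forms satisfying (symmetry)$+$(i) form a line while the real ones form a $4$-dimensional space; so a real form satisfying all your linear constraints need not be approximable by rational ones, and the density argument, used as a black box, produces nothing. You must actually prove that your particular $\psi'$ lands in the small rational span.

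The missing idea is short but genuine, and it is a second place where the hypothesis that $K$ is totally real or CM does essential work. The absolute Galois group permutes the idempotents by $\tau(e_{\sigma})=e_{\tau\circ\sigma}$; since complex conjugation is induced by a field automorphism $\iota$ of $K$ with $\overline{\sigma}=\sigma\circ\iota$ for every embedding $\sigma$, one has $\tau\circ\overline{\sigma}=\overline{\tau\circ\sigma}$, so the set of pairs $(\sigma,\overline{\sigma})$ is Galois-stable and $\sum_l e_l\otimes e_{\overline{l}}$ is Galois-invariant, hence lies in $K\otimes_{\Q}K$. Writing it as $\sum_i a_i\otimes b_i$ with $a_i,b_i\in K$ gives $\psi'(x,y)=\sum_i\psi_0(a_ix,b_iy)\in\Q$ for $x,y\in V$: your $\psi'$ is already rational, and the approximation step is unnecessary. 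The paper avoids the issue entirely by never leaving $K\otimes_{\Q}K$, taking a rational basis and its trace-dual from the start and checking positivity with an orthogonal basis; with the Galois-invariance observation added, your version becomes a correct (and arguably more conceptual) account of the same proof.
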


\begin{proof}
We can assume that $V$ is pure of some weight $m$.
Let $\langle,\rangle$ be a polarization of $V$ as a $\Q$-Hodge
structure. We have to produce another polarization $\langle,\rangle_2\colon
V\times V\arrow \Q$ such that $\langle ax,y\rangle_2=\langle x,\overline{a}y
\rangle_2$ for all $a$ in $K$ and $x,y$ in $V$. Here $\overline{a}$ denotes
complex conjugation on $K$, which is the identity if $K$ is totally
real.

For $a,b$ in $K$, define $\langle a,b\rangle=\tr^K_{\Q}(a\overline{b})$.
This is a positive definite symmetric bilinear form on $K$ as a
$\Q$-vector space. In what follows, write $K^*$ for the dual of $K$
as a $\Q$-vector space. Then the form we defined on $K$ gives an
identification of $K$ with $K^*$. As a result, the identity map
$1_K\in K^*\otimes_{\Q}K$ corresponds to a canonical element
$B\in K\otimes_{\Q}K$. We can write $B$ explicitly in terms of a
basis $e_1,\ldots,e_r$ for $K$ as a $\Q$-vector space. Let 
$f_1,\ldots,f_r$ be the dual basis for $K$, meaning that
$\tr^K_{\Q}(e_i\overline{f_j})=\delta_{i,j}$ for all $i,j$.
Then $B=\sum_j f_j\otimes e_j$.

We use $B$ to ``average'' the given polarization on $V$.
Explicitly, define
$$\langle x,y\rangle_2=\sum_{j=1}^r\langle f_jx,e_jy\rangle.$$
We want to show that $\langle ux,y\rangle_2=\langle x,\overline{u}y
\rangle_2$ for all $u$ in $K$ and $x,y$ in $K$. That is, we have to show
that $\sum_j \langle f_jux,e_jy\rangle=\sum_j\langle f_jx,e_j\overline{u}y
\rangle$. It suffices to show that $\sum_j f_ju\otimes e_j
=\sum_j f_j\otimes\overline{u}e_j$ in $K\otimes_{\Q}K$.
We can identify $K\otimes_{\Q}K$ with $\Q^{r^2}$ as a $\Q$-vector
space by pairing the first variable with $e_j$ and the second variable
with $f_k$, for any given $j,k\in\{1,\ldots,r\}$, using the bilinear
form on $K$. Thus it suffices to show that
$$\langle e_j,f_k\overline{u}\rangle=\langle \overline{u}e_j,f_k\rangle
\in \Q$$
for all $j$ and $k$. This is true, since the left side is
$\tr^K_{\Q}(e_j\overline{f_ku})$ and the right side is
$\tr^K_{\Q}(\overline{u}e_j\overline{f_k})$.

It remains to check that $\langle ,\rangle_2$ is a polarization
of $V$ as a $\Q$-Hodge structure, using that $\langle,\rangle$
is a polarization. First, the formula $B=\sum_j f_j\otimes e_j$
for the tensor $B$ above works using any basis for $K$ as a $\Q$-vector
space in place of $e_1,\ldots,e_r$ and the dual basis in place of 
$f_1,\ldots,f_r$. In particular, $B$ can also be written
as $B=\sum_j e_j\otimes f_j$. From that it is clear that $\langle,
\rangle_2$ is $(-1)^m$-symmetric, since $\langle,\rangle$ is 
$(-1)^m$-symmetric.

Since the action of $K$ on $V$
sends each subspace $V^{a,b}$ of $V\otimes_{\Q}\C$ into itself,
the definition of $\langle,\rangle_2$ shows that we have
$\langle V^{a,b},V^{a',b'}\rangle_2 =0$ for $a'\neq m-a$.
To prove the positivity property of $\langle,\rangle_2$, it is
convenient to choose an orthogonal basis $e_1,\ldots,e_r$ for $K$
as a $\Q$-vector space. Then $a_j:=\langle e_j,e_j\rangle\in \Q$
is positive, and the dual basis for $K$ is given by $f_j=e_j/a_j$.
So
$$\langle x,y\rangle_2=\sum_{j=1}^r \frac{1}{a_j}\langle e_jx,e_jy\rangle$$
for $x,y$ in $V$. That implies the same identity for the associated
complex bilinear form $\langle,\rangle_2$ on $V\otimes_{\Q}\C$.
It follows that
$$i^{a-b}(-1)^{m(m-1)/2}\langle x,\overline{x}\rangle_2 >0$$
for all nonzero elements $x$ of $V^{a,b}$, from the corresponding
inequality for $\langle,\rangle$.
(Note that $\overline{e_jx}=e_j\overline{x}$
for $x$ in $V\otimes_{\Q}\C$,
because $e_j\in K$ is a $\Q$-linear endomorphism of $V$.)
\end{proof}

\section{Endomorphism algebras}

In the following theorem and proof, we follow Shimura's
notation where possible \cite[Theorem 5]{Shimura}. In particular,
for a division algebra $L$ and a subfield $K$ of the center of $L$,
write $[L:K]$ for the dimension of $L$ as a $K$-vector space.

Since the abelian category of polarizable $\Q$-Hodge structures is semisimple,
the endomorphism algebras of all polarizable $\Q$-Hodge structures
of type $(n,0,\ldots,0,n)$ are determined if we can find the endomorphism
algebras of all simple $\Q$-Hodge structures of that type (including
smaller values of $n$). That is solved by Theorem \ref{main}.

Let $V$ be a simple polarizable $\Q$-Hodge structure of weight $w\geq 1$
with Hodge numbers $(n,0,\ldots,0,n)$.
The endomorphism algebra $L$
of $V$ is a division algebra with positive involution. We use
Albert's classification of $L$ into Types I through IV (section
\ref{notation}). Let
$F_0$ be the center of $L$, which is a CM field or a totally real
field, and let $F$ be the subfield of $F_0$ fixed by complex
conjugation. 

Write $g=[F:\Q]$, $2n=m[L:\Q]$,
and $q^2=[L:F_0]$. For $V$ of Type IV,
$L\otimes_{\Q}\C$ is isomorphic to the product of $2g$ copies
of $M_q(\C)$.
Write the simple $L\otimes_{\Q}\C$-modules, each of complex dimension $q$,
as $\chi_1,\ldots,\chi_g,
\overline{\chi_1},\ldots,\overline{\chi_g}$. Let $r_{\nu}$
and $s_{\nu}$ be the multiplicities of $\chi_{\nu}$
and $\overline{\chi_{\nu}}$, respectively,
in the representation of $F_0$
on $V^{2,0}\subset V\otimes_{\Q}\C$. Then $r_{\nu}+s_{\nu}=mq$
for $\nu=1,\ldots,g$. 

As in Shimura, the proof does something more precise
than determining the possible endomorphism algebras.
Rather, for each division algebra $L$ with positive involution,
we describe the Mumford-Tate domain $D$ of $\Q$-Hodge structures
with a given bilinear form and a given action of $L$. For each connected
component of $D$, we determine whether a very general $\Q$-Hodge structure
in that component has endomorphism algebra equal to $L$ or bigger than
$L$. In Type IV, the components of $D$ are indexed
by the numbers $r_{\nu}$ and $s_{\nu}$ defined above.

\begin{theorem}
\label{main}
Let $V$ be a simple polarizable $\Q$-Hodge structure of weight $w\geq 1$
with Hodge numbers $(n,0,\ldots,0,n)$. Let $L$ be the endomorphism
algebra of $V$, and define $F_0$ and $F$ as above.
Then $[L:\Q]$ divides $2n$ and $[F:\Q]$ divides $n$.

Conversely, every division algebra with positive involution
satisfying these two bounds is the endomorphism algebra
of some simple polarizable $\Q$-Hodge structure of weight $w$
and Hodge numbers $(n,0,\ldots,0,n)$, except for five cases
when $w$ is odd and six cases when $w$ is even, as follows.

Odd weight exceptional cases:

(a) Type III and $m=1$. Then the $\Q$-Hodge structure
$V$ is isomorphic to a direct sum $W^{\oplus 2}$, where $W$
has endomorphisms by a CM quadratic extension $F_0$ of $F$ and
$\dim_{F_0}(W)=1$. In particular, $V$ is of CM type.

(b) Type III, $m=2$, $\disc(B,\ba)=1$ in $F^*/(F^*)^2$,
where $B$ is the centralizer of $L$ in $\End_{F}(V)$ and $\ba$
is its involution, coming from the $L$-invariant symmetric bilinear
form $\langle,\rangle$ on $V$.
In all but 2 of the $2^g$ connected components of the Mumford-Tate domain
of $L$-invariant Hodge structures on $(V,\langle,\rangle)$,
a generic $\Q$-Hodge structure $V$ has the ``expected'' endomorphism
algebra $L$. In the other 2 components, a generic $\Q$-Hodge structure
$V$ is a direct sum $W^{\oplus 2}$,
where $W$ is simple and has endomorphism algebra
a Type II quaternion algebra over $F$.

(c) Type IV and $\sum_{\nu=1}^g r_{\nu}s_{\nu}=0$.
Then the $\Q$-Hodge structure $V$ is isomorphic to a direct sum
$W^{\oplus mq^2}$ such that $W$ has endomorphisms by the CM field $F_0$
and $\dim_{F_0}(W)=1$. In particular, $V$ is of CM type.

(d) Type IV, $m=2$, $q=1$, $r_{\nu}=s_{\nu}=1$ for $\nu=1,\ldots,
g$. Then $V$ is generically simple, with
endomorphism algebra a Type II
quaternion algebra over $F$.

(e) Type IV, $m=1$, $q=2$, $r_{\nu}=s_{\nu}=1$ for $\nu=1,\ldots,
g$. Then $V$ is isomorphic to the direct sum $W^{\oplus 2}$,
where $W$ is generically simple, with endomorphism algebra a Type II
quaternion algebra over $F$.

Even weight exceptional cases:

(a') Type II and $m=1$. Then the $\Q$-Hodge structure $V$
is isomorphic
to a direct sum $V=W^{\oplus 2}$, where
$W$ has endomorphisms by a CM quadratic extension $F_0$ of $F$,
and $\dim_{F_0}W=1$. In particular,
$V$ is of CM type.

(b') Type II, $m=2$, $\disc(B,\ba)=1$ in $F^*/(F^*)^2$,
where $B$ is the centralizer of $L$ in $\End_{F}(V)$ and $\ba$
is its involution, coming from the $L$-invariant symmetric bilinear
form $\langle,\rangle$ on $V$.
In all but 2 of the $2^g$ connected components of the Mumford-Tate domain
of $L$-invariant Hodge structures on $(V,\langle,\rangle)$,
a generic $\Q$-Hodge structure $V$ has the ``expected'' endomorphism
algebra $L$. In the other 2 components, a generic $\Q$-Hodge structure
$V$ is a direct sum $W^{\oplus 2}$,
where $W$ is simple and has endomorphism algebra
a Type III quaternion algebra over $F$.

(c') Type IV and $\sum_{\nu=1}^g r_{\nu}s_{\nu}=0$.
Then $V$ is a direct sum
$V=W^{\oplus mq^2}$ for a $\Q$-Hodge structure $W$ with endomorphisms
by $F_0$ such that $\dim_{F_0}(W)=1$. So $V$ is of CM type.

(d') Type IV, $m=2$, $q=1$, $r_{\nu}=s_{\nu}=1$ for $\nu=1,\ldots,g$.
Then $V$ generically has endomorphism algebra 
a Type III quaternion algebra over $F$.

(e') Type IV, $m=1$, $q=2$, $r_{\nu}=s_{\nu}=1$ for $\nu=1,\ldots,g$.
Then the $\Q$-Hodge structure $V$ is a direct sum $V=W^{\oplus 2}$,
and $W$ generically has endomorphism algebra
a Type III quaternion algebra over $F$.

(f') Type I and $m=2$. Then the $\Q$-Hodge structure
$V$ has endomorphisms by a CM quadratic
extension $F_0$ of $F$. Since $\dim_{F_0}(V)=1$, $V$ is of CM type.
\end{theorem}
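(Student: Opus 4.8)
The plan is as follows. Since $V$ is simple and polarizable Hodge structures form a semisimple abelian category, $L=\End_{\QHS}(V)$ is a division algebra by Schur's lemma, it carries the Rosati involution, which is positive, and Albert's classification applies; note that $F$ is totally real in all four types. For the two numerical bounds: as a module over the division algebra $L$ the space $V$ is free, so $\dim_\Q V=2n$ is a multiple of $[L:\Q]$ and $m:=\dim_L V$ is an integer. For the bound on $F$, note that $F_0$ is free of rank at most $2$ over $F$ and $L$ is free of rank $q^2$ over $F_0$, so $V$ is a free $F$-module; since $F$ is totally real, complex conjugation on $F\otimes_\Q\C\cong\C^{[F:\Q]}$ fixes each factor and hence preserves each $F$-isotypic summand of $V\otimes_\Q\C$, interchanging the parts of $V^{w,0}$ and $V^{0,w}$ inside it. Those two parts span the summand and have equal dimension, which is therefore $n/[F:\Q]$, so $[F:\Q]$ divides $n$.

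For the converse, fix a division algebra $L$ with positive involution satisfying the two bounds and let $V$ be a free left $L$-module of rank $m$. Extending the discussion of polarized $K$-Hodge structures in Section~\ref{notation} from fields to $L$ by means of the reduced trace, a polarized $L$-Hodge structure of type $(n,0,\dots,0,n)$ on $V$ is the datum of a $(-1)^w$-hermitian form $(,)\colon V\times V\to L$ for the involution of $L$ together with an $(L\otimes_\Q\C)$-stable isotropic subspace $V^{w,0}\subset V\otimes_\Q\C$ of dimension $n$ satisfying the Hodge--Riemann positivity; in Type IV the type of the subspace is recorded by the multiplicities $(r_\nu,s_\nu)$ with $r_\nu+s_\nu=mq$. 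Up to the shape of its period domain this structure depends only on the parity of $w$ --- the polarization being a positive definite symmetric form when $w$ is even and an alternating form when $w$ is odd --- so there are two parallel cases, the odd one reproducing Shimura's situation for abelian varieties and the even one being new. One first constructs a form $(,)$ with the prescribed local invariants: the signatures at the real places of $F$ dictated by the $(r_\nu,s_\nu)$ (or by positivity in Types I, II and III) and, when $w$ is even, the determinant/discriminant of the Book of Involutions. This is a Hasse-principle statement for hermitian forms over $(L,\ba)$. Given $(,)$, the admissible subspaces $V^{w,0}$ form the real points of a homogeneous space for the $\Q$-algebraic group $G$ of $L$-linear isometries of $(V,(,))$ --- a product over the real places of $F$ of orthogonal, symplectic or unitary groups of suitable signatures according to the Albert type and the parity of $w$ --- and those of its connected components that carry a positivity-compatible point (indexed, in Type IV, by the $(r_\nu,s_\nu)$) make up the Mumford--Tate domain $D$ of $L$-invariant Hodge structures on $(V,(,))$.

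On a connected component $D^\circ$ of $D$, a very general Hodge structure has the smallest possible Mumford--Tate group, computed by Green--Griffiths--Kerr \cite[Proposition VI.A.5]{GGK}, and its endomorphism algebra is the commutant of that group in $\End_\Q(V)$. When the group is the full group attached to $D^\circ$, the commutant is precisely $L$, so a very general point of $D^\circ$ is a simple Hodge structure with endomorphism algebra $L$; this is the non-exceptional case. The exceptions are exactly the components on which this group is forced to be small. It is compact --- so $D^\circ$ is a point, $V$ is of CM type, and the extra endomorphisms form a CM quadratic extension of $F$ or are $F_0$ itself --- exactly in cases (a), (c), (a'), (c') and, for even weight only, (f'). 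It has a rank-one factor subject to one of the accidental isogenies $SL_2(\R)\cong Sp_2(\R)\cong SU(1,1)\cong \mathrm{Spin}_3(\R)$ (with the low-rank orthogonal isogenies also entering when $w$ is even) exactly in cases (b), (d), (e) and (b'), (d'), (e'); the commutant then acquires a quaternion algebra over $F$, totally indefinite (Type II) when $w$ is odd and totally definite (Type III) when $w$ is even --- one source of the role reversal between the two parities. In cases (b) and (b') this collapse happens on exactly $2$ of the $2^g$ components, those singled out by $\disc(B,\ba)=1$, and not on the others; in every other exceptional case it happens on all components, so that $L$ is not the endomorphism algebra of any simple Hodge structure of the given type and weight. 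For odd weight the case analysis is essentially Shimura's \cite[Theorem 5]{Shimura}.

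The principal obstacle is the even-weight case, which is genuinely new. Replacing the alternating polarization by a positive definite symmetric one turns the symplectic groups into orthogonal ones, which can be anisotropic, carry a discriminant (and Hasse/Clifford) invariant, and have their own small-rank exceptional isomorphisms ($SO_2$ is a torus, $\mathrm{Spin}_3\cong SL_2$, $\mathrm{Spin}_5\cong Sp_4$, $\mathrm{Spin}_6\cong SL_4$, $SO_4$ is isogenous to $SL_2\times SL_2$). Carrying the orthogonal discriminant through both the construction of the form $(,)$ and the commutant computation is what produces the hypothesis $\disc(B,\ba)=1$ and the phrase ``all but $2$ of the $2^g$ components'' in (b) and (b'), what forces the collapse to CM type in (f'), and what interchanges Types II and III relative to the odd case. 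The genuinely delicate point throughout is to show that, away from the explicit list, no accidental coincidence occurs --- that the generic Mumford--Tate group really is the full group attached to $D^\circ$ and its commutant is no larger than $L$; for odd weight this can be quoted from Shimura, but for even weight the orthogonal-group analysis has to be redone from the start.
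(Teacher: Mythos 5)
Your overall strategy is the paper's: deduce the two divisibility bounds from the module structure of $V$ over $L$ and over $F\otimes_{\Q}\C$, reduce odd weight to Shimura via the renaming equivalence, and in even weight work with a positive definite symmetric polarization, describe the Mumford--Tate domain of $L$-invariant Hodge structures, and compare the generic Mumford--Tate group of each component with the centralizer of $L$ in the orthogonal group, with the exceptional cases coming from low-rank coincidences. But as written the proposal has a genuine gap at its hinge. You appeal to \cite[Proposition VI.A.5]{GGK} for the statement that a very general point of a component $D^{0}$ has ``the smallest possible Mumford--Tate group,'' whose commutant you then take. That proposition does not apply here: the relevant period domains are non-connected (e.g.\ $\Gr_{\isot}(m,2m)^{g}$ has $2^{g}$ components, and Type IV has $(mq+1)^{g}$), which GGK explicitly exclude. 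The argument that replaces it --- the observation that the generic Hodge group $M$ of $D^{0}$ is normalized by $H(\Q)$, where $H$ is the connected centralizer of $L$ in $O(V)$, hence (by Zariski density of $H(\Q)$ in the connected $\Q$-group $H$) is a connected normal $\Q$-subgroup of $H$, nontrivial because $V^{2,0}\neq 0$, so equal to $H$ whenever $H$ is $\Q$-simple --- is the engine of the whole proof and is absent from your outline. Without it, neither the generic equality $A=L$ nor the list of exceptions can be established; and you yourself concede that ``the orthogonal-group analysis has to be redone from the start,'' i.e.\ the even-weight case analysis (simplicity of $O^{+}(B,\ba)$, $Sp(B,\ba)$, $SU(B,\ba)$ for the stated ranges of $m$ and $q$; the identification $SU(B,\ba)\cong SL(1,L_2)$ with $L_2$ totally definite because $SO(V)$ is $\R$-anisotropic; the commutant computations showing the Lie algebra generates $B$) is exactly the content of the theorem and is not carried out.

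Two smaller points. First, your closing claim that outside (b), (b') the collapse ``happens on all components, so that $L$ is not the endomorphism algebra of any simple Hodge structure of the given type and weight'' is false: in Type IV the extra endomorphisms occur only on the components with $\sum_{\nu}r_{\nu}s_{\nu}=0$ or with $r_{\nu}=s_{\nu}=1$ for \emph{every} $\nu$, and the paper shows that on a component with $0<\sum_{\nu}r_{\nu}s_{\nu}<g$ the generic endomorphism algebra is exactly $L$. Second, the appeal to a Hasse principle to build a hermitian form with prescribed local invariants is unnecessary (and in even weight the polarization is positive definite, so there are no signatures to prescribe): it suffices to take $V=L^{\oplus m}$ with the reduced-trace form, and the theorem is then proved for \emph{every} $L$-invariant positive definite form by the normality argument above.
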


\begin{proof}
At first, we consider a more general situation.
Let $V$ be any simple polarizable $\Q$-Hodge structure.
Fix a polarization $\langle,\rangle\colon V\times V\arrow \Q$.
Then $L:=\End_{\QHS}(V)$ is an algebra with positive involution,
by section \ref{notation}.
Since $V$ is
a vector space over the division algebra $L$,
the dimension $[L:\Q]$ divides $\dim_{\Q}V$. When $V$ has Hodge
numbers $(n,0,\ldots,0,n)$, this proves that $[L:\Q]$ divides $2n$.

Let $F_0$ be the center of $L$, which is a totally real field
or a CM field, and let $F$ be the subfield of $F_0$ fixed
by complex conjugation (which is also the restriction
of the involution on $L$). Let $g=[F:\Q]$. Then $F$ is totally real,
and so $F\otimes_{\Q}\C$ is the product of copies of $\C$
indexed by the embeddings $\sigma_1,\ldots,\sigma_g\colon
F\inj \R$. Each summand $V^{b,c}$ of $V\otimes_{\Q}\C$ 
is a module over $F\otimes_{\Q}\C$. So $V^{b,c}$
splits as a direct sum of complex linear subspaces
on which $F$ acts by $\sigma_1,\ldots,\sigma_g$, respectively.

For any integers $b$ and $c$,
$V^{b,c}$ is the complex conjugate of $V^{c,b}$ in
$V\otimes_{\Q}\C$. Let $x$ be an an element of $V^{b,c}$ on which
$L$ acts by an embedding $\sigma_j$. Since each element $a$ in $F$
acts $\Q$-linearly on $V$, we have $a(\overline{x})=\overline{ax}
=\overline{\sigma_j(a)x}=\sigma_j(a)\overline{x}$ for all $a$ in $F$,
where the last
equality uses that $\sigma_j(a)$ is real. So each embedding
$L\inj\R$ occurs with the same multiplicity in $V^{b,c}$
as in $V^{c,b}$.
Also, $V$ is a free $F$-module, and so $V\otimes_{\Q}\C$
is a free $F\otimes_{\Q}\C$-module. That is, each embedding
$F\inj \R$ occurs the same number of times in $V\otimes_{\Q}\C$.

We now make our assumption that $V$ has weight $w$ and Hodge numbers
$(n,0,\ldots,0,n)$. (To prove the following bound,
it would suffice to assume that $V^{w/2,w/2}=0$.)
Then the previous paragraph implies that each embedding 
$F\inj\R$ occurs the same number of times in $V\otimes_{\Q}\C\cong \C^{2n}$,
and this number is even.
Therefore, $[F:\Q]$ divides $n$, which proves the first part
of the theorem. 

For any positive integer $w$, the category of $\Q$-Hodge structures
of weight 1 and Hodge numbers $(n,n)$ can be identified with
the category of $\Q$-Hodge structures of weight $w$ and Hodge numbers
$(n,0,\ldots,0,n)$, just by renaming $V^{1,0}\subset V\otimes_{\Q}\C$
as $V^{w,0}$. For even weights $w$, this equivalence does not
preserve polarizability and hence is of little interest.
But for odd weights $w$, this equivalence does preserve 
polarizability. Therefore, the endomorphism
algebras of the simple polarizable $\Q$-Hodge structures
of odd weight $w$ and Hodge numbers $(n,0,\ldots,0,n)$
are the same as the endomorphism algebras of the simple abelian
varieties of dimension $n$. These were determined by Shimura
\cite[Theorem 5]{Shimura},
giving the answer in the theorem.

Our proof in even weight is analogous to Shimura's argument,
but we use the language of Mumford-Tate groups so that fewer
explicit calculations are required. (The reader could apply the same
method to reprove Shimura's classification.)

There is an equivalence of categories between $\Q$-Hodge structures
with Hodge numbers $(n,0,n)$ and $\Q$-Hodge structures
of any even weight $w=2m$ and Hodge numbers $(n,0,\ldots,0,n)$,
just by renaming $V^{2,0}\subset V\otimes_{\Q}\C$
as $V^{2m,0}$. This equivalence preserves polarizability; we just
need to replace a polarization $\langle,\rangle$ on $V$ of weight 2
by $(-1)^m\langle,\rangle$. Therefore, the same
endomorphism algebras occur in any even weight. 

It would be easy
to argue directly with $V$ of any even weight, but we choose
to work with $V$ of weight 2 and Hodge numbers
$(n,0,n)$. In this case, the polarization $\langle, \rangle$ of $V$ is 
a positive definite symmetric bilinear form on the
$\Q$-vector space $V$, by section \ref{notation}.
For each positive definite symmetric bilinear form
$\langle ,\rangle$ on $V$,
the space of Hodge structures on $(V,\langle,\rangle)$ of
type $(n,0,n)$ is the space $\Gr_{\isot}(n,2n)$ of 
all isotropic $n$-dimensional $\C$-linear subspaces in $V\otimes_{\Q}\C
\cong\C^{2n}$.

Let $L$ be a division algebra with positive involution.
Let $F_0$ be the center of $L$, and let $F$ be the subfield
of $F_0$ fixed by complex conjugation, or equivalently by
the involution on $L$. Assume
that $[L:\Q]$ divides $2n$ and $[F:\Q]$ divides $n$. Let $V$
be a $\Q$-vector space of dimension $2n$. By the first
assumption, we can give $V$ the structure of a left $L$-vector space; choose
such an action of $L$ on $V$. Then there is a positive definite
symmetric bilinear form $\langle,\rangle\colon V\times V\arrow \Q$ which
is $L$-invariant, meaning that
$\langle ux,y\rangle=\langle x,\overline{u}y\rangle$
for all $u$ in $L$ and $x,y$ in $V$. Indeed, $L$ itself
has an $L$-invariant positive definite symmetric bilinear form
given by $\langle x,y\rangle=\tr^L_{\Q}(x\overline{y})$, and we can
view $V$ as the direct sum of copies of $L$.

Fix any positive definite symmetric bilinear form $\langle,\rangle$ on the
$\Q$-vector space $V$ which is $L$-invariant.
We will show that there is an $L$-invariant $\Q$-Hodge structure
on $(V,\langle,\rangle)$ with endomorphism algebra equal to $L$,
apart from the exceptions listed in the theorem. This is slightly
stronger than the theorem as stated, since we are fixing the action of $L$
and the symmetric bilinear form on $V$.

Write $g=[F:\Q]$, $2n=m[L:\Q]$ where $m$ is a positive integer
(which is even for $V$ of Type I by our assumption that
$[F:\Q]$ divides $n$),
and $q^2=[L:F_0]$. For $V$ of Type IV, recall the definition
of $r_{\nu}$ and $s_{\nu}$ for $\nu=1,\ldots,g$ from before the
theorem.

We can describe the ``Mumford-Tate domain'' of all $L$-invariant
Hodge structures on $(V,\langle,\rangle)$. It helps to observe that
the ring $L\otimes_{\Q}\R$ is isomorphic to:
\begin{align*}
\text{Type I: }&\R\times\cdots\times \R\\
\text{Type II: }&M_2(\R)\times\cdots\times M_2(\R)\\
\text{Type III: }&\HH\times\cdots\times \HH\\
\text{Type IV: }&M_q(\C)\times\cdots\times M_q(\C),
\end{align*}
where there are $g$ factors in each case, and the involution on the right
is tbe identity in Type I, $X\mapsto X^t$ on each copy of $M_2(\R)$ in Type II,
$X\mapsto \tr^{\HH}_{\R}(x)-x$ on each copy of the quaternions $\HH$
in Type III, and $X\mapsto \overline{X^t}$ on each copy of $M_q(\C)$
in Type IV \cite[pp.~201-202]{Mumford}. Recall that the period domain
of all Hodge structures on $(V,\langle,\rangle)$ is the isotropic
Grassmannian $\Gr_{\isot}(n,2n)$. We deduce that the Mumford-Tate
domain $D$ of $L$-invariant Hodge structures on $(V,\langle,\rangle)$
is:
\begin{align*}
\text{Type I: }&\Gr_{\isot}(m/2,m)^g\\
\text{Type II: }&\Gr_{\isot}(m,2m)^g\\
\text{Type III: }&\Gr_{\Lag}(m,2m)^g\\
\text{Type IV: }&\bigg[ \coprod_{j=0}^{mq}\Gr(j,mq)\bigg]^g.
\end{align*}
Here $\Gr_{\isot}(a,b)$ is the space of isotropic linear
subspaces of dimension $a$ in a complex vector space of dimension
$b$ with a nondegenerate symmetric bilinear form, and $\Gr_{\Lag}(m,2m)$
is the space of isotropic subspaces of dimension $m$
in a complex vector space of dimension $2m$ with
a nondegenerate alternating bilinear form. We see that the number
of connected components of the Mumford-Tate domain $D$
is $2^g$ in Type I, $2^g$ in Type II, 1 in Type III,
and $(mq+1)^g$ in Type IV.

Let $d$ be the complex dimension of the Mumford-Tate domain. Then:
\begin{align*}
\text{Type I: }n&=\frac{m}{2}g, \; d=\frac{1}{2}\frac{m}{2}\bigg( 
\frac{m}{2}-1\bigg) g\\
\text{Type II: }n&=2mg, \; d=\frac{1}{2}m(m-1)g\\
\text{Type III: }n&=2mg, \; d=\frac{1}{2}m(m+1)g\\
\text{Type IV: }n&=q^2mg, \; d=\sum_{\nu=1}^g r_{\nu}s_{\nu}.
\end{align*}
Shimura's formula for the dimensions of the analogous Mumford-Tate
domains in the period domain of abelian varieties is similar,
but with the expressions $x(x+1)/2$ switched with $x(x-1)/2$
\cite[section 4.1]{Shimura}. This is related
to the switch between symplectic and orthogonal groups, in comparing
polarizable Hodge structures of odd weight with those of even weight.

Let $D^0$ be a connected component of the Mumford-Tate domain $D$
of $\Q$-Hodge structures of type $(n,0,n)$ on $(V,\langle,\rangle)$
with endomorphisms by the given homomorphism $L\arrow \End_{\Q}(V)$
of algebras with involution. For each larger subalgebra $L'$ 
of $\End_{\Q}(V)$, the subspace of $D^0$
of Hodge structures  with endomorphisms by $L'$
is a closed analytic subspace of $D^0$. Therefore, there is a well-defined
algebra with involution $A\subset \End_{\Q}(V)$, the  {\it generic
endomorphism
algebra }for $D^0$, which is the endomorphism
algebra of a very general $\Q$-Hodge structure $V$ in $D^0$. (That is, $A$
is the endomorphism algebra of every Hodge structure in $D^0$ outside
countably many closed analytic subspaces not equal to $D^0$.) Clearly
$A$ contains $L$. The main part of the theorem is to show that $A$ is equal
to $L$ in most cases.

It is also convenient to consider the {\it generic Hodge group }$M$
of $D^0$, defined as the Hodge group (section \ref{notation})
of a very general $\Q$-Hodge structure in $D^0$. We know that
$A$ is the commutant of $M$ in $\End_{\Q}(V)$. Since $L$ is contained
in $A$ and $M$ is a connected $\Q$-group, $M$ is contained
in the connected component $H$ of the centralizer of $L$
in the $\Q$-group $O(V)$. We call $H$ the {\it expected Hodge group}.
(The $\Q$-group $H$ depends on $(V,\langle,\rangle,L)$,
but not on the particular
component $D^0$.)

A crucial observation is that the generic endomorphism algebra $A\subset
\End_{\Q}(V)$ and the generic Hodge group $M\subset O(V)$
are determined by $(V,\langle,\rangle,L\arrow \End_{\Q}(V),D^0)$.
Since $H(\Q)$ preserves these data, $H(\Q)$ normalizes
both $A$ and $M$; for $D^0$,
this uses that $H$ is connected. Since $H$ is a connected group
over the perfect field $\Q$, $H(\Q)$ is Zariski dense in $H$
\cite[Corollary 18.3]{Borel},
and so $A$ and $M$ are in fact normalized by the algebraic group $H$.
Since $M\subset H$, we can say that $M$ is a connected normal $\Q$-subgroup
of $H$. Thus, if $H$ is $\Q$-simple, then $M$ must be either 1 or $H$.
But $M$ can never be 1; that would mean that the generic Hodge structure
$V$ in $D^0$ has $V^{a,b}=0$ for $a\neq b$,
whereas in fact $V^{2,0}$ is not zero.
So if $H$ is $\Q$-simple, then the generic Hodge group $M$ is equal
to $H$. (This argument is inspired by Green-Griffiths-Kerr's approach
to computing generic Mumford-Tate groups, although
they exclude the non-connected period domains which we encounter here
\cite[Proposition VI.A.5]{GGK}.)
As a result, when $H$ is $\Q$-simple,
we know the generic endomorphism algebra $A$:
it is the centralizer of the ``known'' $\Q$-group $H$ in $\End_{\Q}(V)$.
In most cases, that will imply that $A$ is equal to $L$, as we want.

Suppose that $L$ is of Type I, so $L=F$. Then the expected Hodge group $H$
is $R_{F/\Q}SO(_F V)$, where $_F V$ denotes $V$ as an $F$-vector space.
Suppose that $m=\dim_F(V)$ (which is even in this case) is at least 6,
or that $m=4$ and $V$ has discriminant not equal to $1$ in $F^*/(F^*)^2$.
Then $SO(_F V)$ is $F$-simple, and so $H$ is $\Q$-simple.
By the argument above, the generic Hodge group $M$ of $\Q$-Hodge
structures in each component $D^0$ must be equal to $H$. So the generic
endomorphism algebra $A$ is equal to the centralizer
of $R_{F/\Q}SO(_F V)$ in $\End_{\Q}(V)$. Clearly $A$ contains $F$.
To show that $A$ is equal to $F$, note that $A\subset \End_{\Q}(V)$
must commute with the Lie algebra of $R_{F/\Q}SO(_F V)$, which
is $\so(_F V)$, and so it commutes with the $\Q$-algebra generated
by this Lie algebra. The $F$-algebra generated by $\so(_F V)$ is equal
to $\End_F(V)$, just using that $\dim_F(V)$ is at least 3 (so that $_F V$
is an absolutely irreducible representation of $\so(_F V)$). So $A\subset
\End_{\Q}(V)$ must be contained in the commutant of $\End_F(V)$
in $\End_{\Q}(V)$, which is equal to $F$. Thus we have shown that for $L$
of Type I with $m\geq 6$, or with $m=4$ and $V$ of discriminant
not equal to $1\in F^*/(F^*)^2$,
the generic endomorphism algebra is equal to $L$
($=F$). At the same time, we found that the generic Mumford-Tate
group is equal to $R_{F/\Q}SO(_F V)$.

For a semisimple algebra $A$ with involution $\ba$, define
the group of {\it isometries }to be
$$\Iso(A,\ba)=\{g\in A^*:\overline{g}=g^{-1}\}.$$
Following the Book of Involutions \cite{KMRT}, we write
$$\Iso(A,\ba)=\begin{cases} O(A,\ba) & \text{if }\ba
\text{ is of orthogonal type,}\\
Sp(A,\ba) & \text{if }\ba
\text{ is of symplectic type,}\\
U(A,\ba) & \text{if }\ba
\text{ is of unitary type.}
\end{cases}$$
For an algebra $A$ with orthogonal involution, with center $F$
and $[A:F]=q^2$, 
the subgroup $O^{+}(A,\ba)=\ker(\Nrd\colon O(A,\ba)\arrow
\{\pm 1\})$ (as an algebraic group) is a form of $SO(q)$ over $F$
(meaning that the two groups become isomorphic over an algebraic
closure of $F$).
For $A$ with symplectic involution, $q$ must be even, and $Sp(A,\ba)$
is a form of the symplectic group $Sp(q)$ over $F$. Finally,
if $A$ has unitary involution over $F_0$ and $[A:F_0]=q^2$,
with $F\subset F_0$ the subfield fixed by the involution,
then the unitary group $U(A,\ba)$ is a form of $GL(q)$ over $F$.

Next, let $(V,\langle,\rangle,L,D^0)$
be of Type II. Thus $L$ is a totally indefinite
quaternion algebra over a totally real field $F$, and
$2n=m[L:\Q]$. By definition, the ``expected Hodge group''
$H$ is the connected component of the centralizer of $L$
in $SO(V_{\Q})$. Thus the Lie algebra of $H$ is the antisymmetric
part of the centralizer $B$ of $L$ in $\End_{\Q}(V)$,
or equivalently in $\End_F(V)$. Here $B$
is isomorphic to $M_m(L^{\op})$, with an orthogonal involution.
So the expected Hodge group $H$ is $R_{F/\Q}O^+(B,\ba)$.
Here $O^+(B,\ba)$ is an $F$-form of $SO(2m)$.

Recall from section \ref{notation} the discriminant
of a central simple algebra $B$ of even degree $n=2m$
with orthogonal involution.
For $n=4$, the group $O^{+}(B,\ba)$
is $F$-simple if and only if $\disc(B,\ba)\neq 1
\in F^*/(F^*)^2$ \cite[Theorem 15.7 and section 26.B]{KMRT}.

Suppose that $m\geq 3$, or that $m=2$ and $\disc(B,\ba)\neq
1 \in F^*/(F^*)^2$. Then
$O^+(B,\ba)$ is $F$-simple,
and so $H$ is $\Q$-simple. By the argument above, the generic
Hodge group $M$ is equal to $H$. 

So the generic endomorphism
algebra $A$ is the centralizer of $R_{F/\Q}O^+(B,\ba)$
in $\End_{\Q}(V)$. Clearly $L$ is contained in $A$. To see that equality
holds, note that $A$ commutes with the Lie algebra $\so(B,\ba)
=\Alt(B,\ba)\subset \End_{\Q}(V)$, hence with the algebra
generated by $\Alt(B,\ba)$. This algebra is all of $B$,
as one can check over an algebraic closure $\overline{F}$ of $F$,
using that $\overline{F}^{2m}$ is an irreducible
representation of $SO(2m)$, fore $m\geq 2$. So $A$ is contained
in the commutant of $B\cong M_m(L^{\op})$ in $\End_{\Q}(V)$
or equivalently in
$\End_F(V)$, which is equal to $L$. We have shown that in Type II
with $m\geq 3$, the generic endomorphism algebra is equal to $L$,
as we want.

Next, let $(V,\langle,\rangle,L,D^0)$
be of type III. Thus $L$ is a totally definite quaternion
algebra over a totally real field $F$, and $2n=m[L:\Q]$.
Let $B$ be the centralizer of $L$ in $\End_{\Q}(V)$ or equivalently
in $\End_F(V)$; then $B\cong M_m(L^{\op})$ with a symplectic involution.
The ``expected Hodge group'' $H$ is defined to be the connected
component of the centralizer of $L$ in $SO(V_{\Q})$, that is,
the connected component of $B\cap SO(V_{\Q})$. So 
$H=R_{F/\Q}Sp(B,\ba)$. Since $Sp(B,\ba)$ is an
$F$-form of the symplectic group $Sp(2m)$, it is absolutely simple
for all $m\geq 1$. So $H$ is $\Q$-simple. After that, the argument
is the same as in Type II. We conclude that the generic Hodge group
is $R_{F/\Q}Sp(B,\ba)$ and the generic endomorphism
algebra is $L$, for $L$ of Type III with any $m\geq 1$.

Finally, let $(V,\langle,\rangle,L,D^0)$
be of Type IV.  Thus $L$ is a central division algebra
over a CM field $F_0$, $L$ has a unitary involution, and $2n=m[L:\Q]$. 
We write $[L:F_0]=q^2$. Let $B\cong M_m(L^{\op})$ be the centralizer
of $L$ in $\End_{\Q}(V)$, or equivalently in $\End_{F_0}(V)$.
The ``expected Hodge group'' $H$
is defined as the connected component of the centralizer of $L$
in $SO(V_{\Q})$, that is, connected component of the identity
in $B\cap SO(V_{\Q})$.
So $H=R_{F/\Q}U(B,\ba)$. Since $U(B,\ba)$
is an $F$-form of $GL(mq)$, Type IV is more subtle, in that
$H$ is never $\Q$-simple. It is the product of the $\Q$--simple
group $R_{F/\Q}SU(B,\ba)$ with a torus.

The dimension of the Mumford-Tate domain $D^0$ is
$\sum_{\nu=1}^g r_{\nu}s_{\nu}$. Suppose that this is positive.
Since there are only countably many Hodge structures
of CM type, a very general Hodge structure $V$ in $D^0$ is not
of CM type. So the generic Hodge group is not commutative.
Since the generic Hodge group is normal in $R_{F/\Q}U(B,\ba)$,
it must contain $R_{F/\Q}SU(B,\ba)$.
since this is $\Q$-simple for all $m\geq 1$. The generic
endomorphism algebra $A$ contains $L$, and is
contained in the centralizer of $R_{F/\Q}SU(B,\ba)$
in $\End_{\Q}(V)$. So $A$ must commute with the Lie algebra
$\su(B,\ba)=\ker(\tr^B_{F_0}\colon \Alt(B,\ba)\arrow F_0)
\subset \End_{\Q}(V)$, hence with the algebra generated by
$\su(B,\ba)$. 

Suppose that $mq\geq 3$. Then $\su(B,\ba)$ generates
$B$ as an algebra. So the generic endomorphism algebra $A$
is contained in the commutant
of $B$ in $\End_{\Q}(V)$, or equivalently in $\End_{F_0}(V)$,
which is $L$. Since $L$ is contained in $A$, we have shown
that the generic
endomorphism algebra $A$ is equal to $L$ in Type IV when
$\sum_{\nu=1}^g r_{\nu}s_{\nu}>0$ and $mq\geq 3$.

We now turn to the remaining cases, which involve Hodge structures
of low dimension over the totally real field $F$. For example,
case (d'):
let $L$ be of Type IV with $m=2$ and $q=1$,
while $r_{\nu}=s_{\nu}=1$ for $\nu=1,\ldots,g$. Thus $L$
is a CM field $F_0$, and $V$ has dimension 2
as an $L$-vector space.

The component $D^0$ of the Mumford-Tate domain
of $\Q$-Hodge structures on $(V,\langle,\rangle,L)$ given by
$r_{\nu}=s_{\nu}=1$ is isomorphic
to $\Gr(1,2)^g\cong (\C\P^1)^g$. I claim that every Hodge structure
in $D^0$ has extra endomorphisms; more precisely, the generic
endomorphism algebra for $\Q$-Hodge structures in $D^0$ is
of Type III, a totally definite quaternion algebra $L_2$ over
the totally real field $F$. Here $[L_2:\Q]=2[L:\Q]$ and so
$V$ has dimension 1 as an $L_2$-vector space. We showed above that
the Type III period domain for $(V,\langle,\rangle,L_2)$ is
$\Gr_{\Lag}(1,2)^g\cong (\C\P^1)^g$, and that the generic endomorphism
algebra for $\Q$-Hodge structures in that domain is equal to $L_2$.
So it suffices to show that there is an inclusion $L\subset L_2$
of algebras with involution, compatibly with the actions of $L$
and $L_2$ on $V$. (Then there is an obvious inclusion from the period
domain for $L_2$ into the one for $L$, which must be an isomorphism
since both domains are isomorphic to $(\C\P^1)^g$.)

For convenience, we work at first with the weaker assumption
that $m=2$, $q=1$, and $\sum_{\nu=1}^g r_{\nu}s_{\nu}>0$.

One way to find $L_2$ is that $SU(B,\ba)$ is a form
of $SL(2)$ over $F$, and so it is isomorphic to $SL(1,L_2)$ for a unique
quaternion algebra $L_2$ over $F$ \cite[Theorem 26.9]{KMRT}. By our earlier
discussion of Type IV,
the generic Hodge group $M$ for $D^0$ is normal in
the ``expected Hodge group'' $H=R_{F/\Q}U(B,\ba)$.
Since the period domain $D^0$ has dimension $g>0$, $M$ is not commutative
(as there are only countably many $\Q$-Hodge structures of CM type).
Since $H$ is the product of the $\Q$-simple group $R_{F/\Q}SU(B,\ba)
=R_{F/\Q}SL(1,L_2)$
with an abelian subgroup, $M$ must contain $R_{F/\Q}SL(1,L_2)$.
We have $M\subset H\subset SO(V_{\Q})$, where $SO(V_{\Q})$
is $\R$-anisotropic
(equivalently, its group of real points is compact) because
$\langle,\rangle$ is positive definite on $V$. So $M$ is also
$\R$-anisotropic, which means that the quaternion algebra $L_2$
over $F$ is totally definite, that is, of Type III.

Therefore, the generic endomorphism algebra $A$ for $D^0$
commutes with $R_{F/\Q}SL(1,L_2)$. So $A$ commutes with
the $\Q$-algebra generated by the Lie algebra of $R_{F/\Q}SL(1,L_2)$,
which is $\ker(\tr\colon L_2\arrow F)$. That algebra is equal to $L_2$.

The homomorphism $SL(1,L_2)\arrow SU(B,\ba)$ gives a homomorphism
$L_2\arrow B\arrow \End_{\Q}(V)$ of algebras with involution.
Since $[L_2:\Q]=2[L:\Q]$, $V$
has dimension 1 as an $L_2$-vector space. So the commutant
of $L_2$ in $\End_{\Q}(V)$ is isomorphic to $L_2^{\op}$.
So $L\subset A\subset L_2^{\op}$. Since $[L_2^{\op}:\Q]=2[L:\Q]$,
the generic endomorphism algebra
$A$ for $D^0$ must be either $L$ or $L_2^{\op}$.

We have an inclusion from the Mumford-Tate domain for $(V,\langle,
\rangle,L_2^{\op})$ into the one for $(V,\langle,\rangle,L)$.
The first is isomorphic to $(\C\P^1)^g$ and the second is isomorphic
to $D=(\coprod_{r_{\nu}=0}^2\Gr(r_{\nu},2))^g$, as shown earlier.
So the Mumford-Tate domain for $L_2^{\op}$ must be equal to the unique
component $D^0$ of $D$ of dimension $g$, the one with
$r_{\nu}=s_{\nu}=1$ for $\nu=1,\ldots,g$. We have thus shown that
every $\Q$-Hodge structure in that component $D^0$ has extra
endomorphisms, with the generic endomorphism algebra being
the Type III quaternion algebra $L_2^{\op}$ over $F$. At the same time,
the argument shows that when $\sum_{\nu} r_{\nu}s_{\nu}$ is greater
than 0 but less than $g$, the generic endomorphism algebra
for that component of $D$ is equal to the ``expected'' algebra $L$.

Next, consider case (e'):
let $L$ be of Type IV with $m=1$ and $q=2$,
while $r_{\nu}=s_{\nu}=1$ for $\nu=1,\ldots,g$. Thus $L$
is a quaternion algebra over a CM field $F_0$, and $V$ has dimension 1
as an $L$-vector space.

Since $r_{\nu}+s_{\nu}=mq$ for $\nu=1,\ldots,g$, we have $r_{\nu}
=s_{\nu}=1$ for all $\nu$. 
This component $D^0$ of the Mumford-Tate domain
of $\Q$-Hodge structures on $(V,\langle,\rangle,L)$ is isomorphic
to $\Gr(1,2)^g\cong (\C\P^1)^g$. I claim that every Hodge structure
in this domain has extra endomorphisms. More precisely, every
Hodge structure in $D^0$ is non-simple, with $V=W^{\oplus 2}$.
The generic endomorphism algebra for $W$ is
of Type III, a totally definite quaternion algebra which we call
$L_2^{\op}$ over
the totally real field $F$. Here $[L_2^{\op}:\Q]=[L:\Q]/2$ and so
$W$ has dimension 1 as an $L_2^{\op}$-vector space. We showed above that
the Type III period domain for $(W,\langle,\rangle,L_2^{\op})$ is
$\Gr_{\Lag}(1,2)^g\cong (\C\P^1)^g$, and that the generic endomorphism
algebra for $\Q$-Hodge structures in that domain is equal to $L_2^{\op}$.
So it suffices to show that there is an inclusion $L\subset M_2(L_2^{\op})$
of algebras with involution, compatibly with the actions of $L$
and $M_2(L_2^{\op})$ on $V$.
(Then there is an obvious inclusion from the period
domain for $L_2^{\op}$ into the one for $L$, which must be an isomorphism
since both domains are isomorphic to $(\C\P^1)^g$.)

For convenience, we work at first with the weaker assumption
that $m=1$, $q=2$, and $\sum_{\nu=1}^g r_{\nu}s_{\nu}>0$.

As in case (d'), the assumptions imply that the generic
Mumford-Tate group $M$ for the given component $D^0$
contains $R_{F/\Q}SU(B,\ba)$, and that
$SU(B,\ba)$ is isomorphic to $SL(1,L_2)$
for a unique totally definite quaternion algebra $L_2$ over
the totally real field $F$. As in case (d'), the generic
endomorphism algebra $A$ for $D^0$ commutes with $R_{F/\Q}
SL(1,L_2)$ and hence with the algebra $L_2\subset \End_{\Q}(V)$.

The homomorphism $SL(1,L_2)\arrow SU(B,\ba)$ gives a homomorphism
$L_2\arrow B\arrow \End_{\Q}(V)$ of algebras with involution.
Since $[L_2:\Q]=[L:\Q]/2$, $V$
has dimension 2 as an $L_2$-vector space. So the commutant
of $L_2$ in $\End_{\Q}(V)$ is isomorphic to $M_2(L_2^{\op})$.
So $L\subset A\subset M_2(L_2^{\op})$. Since $[M_2(L_2^{\op}):\Q]=2[L:\Q]$,
the generic endomorphism algebra
$A$ for $D^0$ must be either $L$ or $M_2(L_2^{\op})$.

We have an inclusion from the Mumford-Tate domain for $(W,\langle,
\rangle,L_2^{\op})$ into the one for $(V,\langle,\rangle,L)$, by sending
$W$ to $V:=W^{\oplus 2}$.
The first is isomorphic to $(\C\P^1)^g$ and the second is isomorphic
to $D=(\coprod_{r_{\nu}=0}^2\Gr(r_{\nu},2))^g$, as shown earlier.
So the Mumford-Tate domain for $L_2$ must be equal to the unique
component $D^0$ of $D$ of dimension $g$, the one with
$r_{\nu}=s_{\nu}=1$ for $\nu=1,\ldots,g$. We have thus shown that
in case (e'),
every $\Q$-Hodge structure in that component $D^0$ has extra
endomorphisms, with $V$ being a direct sum $V=W^{\oplus 2}$
of $\Q$-Hodge structures and $W$ generically having endomorphism algebra
equal to the Type III quaternion algebra $L_2^{\op}$ over $F$.
At the same time,
the argument shows that when $\sum_{\nu} r_{\nu}s_{\nu}$ is greater
than 0 but less than $g$, the generic endomorphism algebra
for that component of $D$ is equal to the ``expected'' algebra $L$.

Next, consider case (g'), with $L$ of Type I,
$m=4$, and $\disc(V)=1\in F^*/(F^*)^2$.
So $L$ is equal to $F$, a totally real field. The Mumford-Tate
domain $D$ for $(V,\langle,\rangle,L)$ is $\Gr_{\isot}(2,4)^g
\cong (\C\P^1\coprod \C\P^1)^g$. 

For each component $D^0$ of $D$, the generic Mumford-Tate group $M$
is normal in the ``expected'' Mumford-Tate group $H=R_{F/\Q}SO(_F V)$,
as shown earlier. Since $_F V$ has discriminant 1 in $F^*/(F^*)^2$,
$SO(_F V)$ is the product of two subgroups, $SL(1,L_2)SL(1,L_2^{\op})$,
where $L_2$ is a quaternion algebra over $F$ \cite[Corollary 15.12]{KMRT}.
We know that $M$ is nontrivial,
since a Hodge structure $V$ in $D^0$ has $V^{2,0}\neq 0$;
so the generic Mumford-Tate group $M$ for $D^0$
is either $R_{F/\Q}SL(1,L_2)$, $R_{F/\Q}SL(1,L_2^{\op})$, or all of $H$.
The generic endomorphism algebra for $D^0$ is the centralizer
of the generic Mumford-Tate group in $\End_{\Q}(V)$. So
the generic endomorphism algebra is $L_2^{\op}$, $L_2$,
or $F$, respectively. (Here we are thinking of $\End_F(V)$
as $L_2\otimes_F L_2^{\op}$; of course, $L_2^{\op}$ is isomorphic
to $L_2$, because $L_2$ is a quaternion algebra.)

Since $V$ has dimension 1 as an $L_2$-vector space,
the Mumford-Tate domain for $(V,\langle,\rangle,L_2)$ is isomorphic
to $(\C\P^1)^g$. The generic endomorphism algebra for that domain
of Type III is $L_2$. Likewise, the Mumford-Tate domain
for $(V,\langle,\rangle,L_2^{\op})$ is a different copy
of $(\C\P^1)^g$ inside $D$. We conclude that the generic endomorphism
algebra for these two connected components of $D$ is isomorphic
to $L_2$ ($\cong L_2^{\op})$, while the generic endomorphism
algebra for each of the other $2^g-2$ components of $D$ is the
``expected'' algebra $L=F$.

Next, consider case (b'), with $L$ of Type II, $m=2$,
and $\disc(B,\ba)=1\in F^*/(F^*)^2$. So $L$
is a totally indefinite quaternion algebra over the totally real
field $F$. 
Let $S_1,\ldots,S_g$ be the simple modules (of real dimension 2)
for the ring $L\otimes_{\Q}\R\cong (M_2(\R))^g$.
The Mumford-Tate domain $D$ for $(V,\langle,\rangle,L)$
is $\Gr_{\isot}(2,4)^g\cong (\C\P^1\coprod \C\P^1)^g$. 

For each component $D^0$ of $D$, the generic Mumford-Tate group $M$
is normal in the ``expected'' Mumford-Tate group
$H=R_{F/\Q}O^{+}(B,\ba)$,
as shown earlier. Since $(B,\ba)$ has discriminant 1 in $F^*/(F^*)^2$,
$O^{+}(B,\ba)$ is the product of two subgroups,
$SL(1,L_2)SL(1,L_3)$,
where $L_2$ and $L_3$ are quaternion algebras
over $F$ \cite[Corollary 15.12]{KMRT}.
We know that $M$ is nontrivial,
since a Hodge structure $V$ in $D^0$ has $V^{2,0}\neq 0$;
so the generic Mumford-Tate group $M$ for $D^0$
is either $R_{F/\Q}SL(1,L_2)$, $R_{F/\Q}SL(1,L_3)$, or all of $H$.
The generic endomorphism algebra for $D^0$ is the centralizer
of the generic Mumford-Tate group in $\End_{\Q}(V)$. So
the generic endomorphism algebra is $M_2(L_2^{\op})$, $M_2(L_3^{\op})$,
or $L$, respectively. (Here we are thinking of $B$
as $L_2\otimes_F L_3$, where $L_2$ and $L_3$ both have the canonical
symplectic involution. The whole algebra $\End_F(V)$
is the tensor product $L\otimes_F L_2\otimes_F L_3$, in this situation.)

Using the algebra $M_2(L_2^{\op})\subset \End_{\Q}(V)$, we can view
the $\Q$-vector space $V$ as a direct sum $V=W_2^{\oplus 2}$. 
Since $W_2$ has dimension 1 as an $L_2^{\op}$-vector space,
the Mumford-Tate domain for $(W_2,\langle,\rangle,L_2^{\op})$ is isomorphic
to $(\C\P^1)^g$. The generic endomorphism algebra for $W_2$ in that domain
of Type III is $L_2^{\op}$, as have shown. Likewise, the inclusion 
$M_2(L_3^{\op})\subset \End_{\Q}(V)$ gives a different decomposition
$V=W_3^{\oplus 2}$. We can view
the Mumford-Tate domain
for $(W_3,\langle,\rangle,L_3^{\op})$ as a different copy
of $(\C\P^1)^g$ inside $D$.
We conclude that in case (b'), the generic $\Q$-Hodge
structure in each of these two connected components of $D$ is non-simple,
of the form $W^{\oplus 2}$ where $W$ has endomorphism algebra
$L_2^{\op}$ or $L_3^{\op}$ of Type III, respectively. We also see that
the generic endomorphism
algebra for each of the other $2^g-2$ components of $D$ is the
``expected'' algebra $L$.

It remains to consider the cases where the generic $\Q$-Hodge
structure with endomorphisms by $L$ is in fact of CM type.
Every component $D^0$ of the Mumford-Tate domain contains a CM point
\cite[Lemma VI.C.1]{GGK},
and there are only countably many CM points in any Mumford-Tate domain.
So the generic Hodge structure in $D^0$ is of CM type
if and only if $D^0$ is a point. By the formula
for the dimension of $D^0$, these cases are:\\
(f') Type I, $m=2$.\\
(a') Type II, $m=1$.\\
(c') Type IV, $\sum_{\nu=1}^g r_{\nu}s_{\nu}=0.$

Let $(V,\langle,\rangle,L)$ be in case (f'). So $L$ is a totally real field
$F$ and $\dim_F(V)=2$. The ``expected'' Hodge group $H$ as defined earlier
is $R_{F/\Q}SO(_F V)$, which is commutative. Since the Hodge
group $M$ of $V$ is a normal subgroup of $H$, we see directly that
$M$ is commutative; that is, $V$ is of CM type. 

As an $F$-vector space of dimension 2, $V$ has a canonical symmetric
bilinear form $(,)$ (Lemma \ref{polar}). This form is positive definite,
and so its discriminant (the negative of the determinant, in this case)
is totally
negative in $F^*/(F^*)^2$. So $F_0:=F(\sqrt{\disc(_F V)})$ is a totally
imaginary quadratic extension of $F$.
The ``expected''
endomorphism algebra of $V$ is the centralizer of $R_{F/\Q}SO(_F V)$
in $\End_{\Q}(V)$, which is the CM field $F_0$. The actual endomorphism
algebra contains $F_0$. Since $\dim_{F_0}(V)=1$, we have constructed
enough endomorphisms to show again that $V$
is of CM type.

Next, let $(V,\langle,\rangle,L)$ be in case (a'): Type II and $m=1$.
So $L$ is a totally 
indefinite quaternion algebra over a totally real field $F$,
and $\dim_L(V)=1$. Let $B$ be the centralizer of $L$ in $\End_{\Q}(V)$,
or equivalently in $\End_F(V)$.
The ``expected'' Hodge group $H$ as defined earlier is
$R_{F/\Q}O^+(B,\ba)$, which is commutative since
$O^+(B,\ba)$ is an $F$-form of $SO(2m)=SO(2)$. So we see directly
that $V$ is of CM type.

The discriminant $\disc(B,\ba)$ in $F^*/(F^*)^2$ is totally
negative, using the positivity of the symmetric bilinear form
$(,)$ on the 4-dimensional $F$-vector space
$V$. So $F_0:=F(\sqrt{\disc(B,\ba)})$ is a totally
imaginary quadratic extension field of $F$. The ``expected''
endomorphism algebra as defined earlier is the centralizer
of $R_{F/\Q}O^+(B,\ba)$, which is the matrix algebra $M_2(F_0)$.
The actual endomorphism algebra of the $\Q$-Hodge structure
$V$ contains $M_2(F_0)$. We conclude that the $\Q$-Hodge structure $V$
is isomorphic
to a direct sum $V=W^{\oplus 2}$, where $W$ has endomorphisms
by the CM field $F_0$, and $\dim_{F_0}W=1$. We have
constructed enough endomorphisms to see again that $V$
is of CM type.

Finally, let $(V,\langle,\rangle,L)$ be in case (c'): Type IV
with $\sum_{\nu=1}^g r_{\nu}s_{\nu}=0$. So $L$ is a central simple
algebra with unitary involution over a CM field $F_0$.
We write $2n=m[L:\Q]$ and
$[L:F_0]=q^2$. Let $B\cong M_m(L^{\op})$ be the centralizer
of $L$ in $\End_{\Q}(V)$, or equivalently in $\End_{F_0}(V)$.
The ``expected Hodge group'' $H$ as defined earlier
is $H=R_{F/\Q}U(B,\ba)$; here $U(B,\ba)$
is an $F$-form of $GL(mq)$. The Hodge group $M$ of $V$ is a normal
connected $\Q$-subgroup of $H$, and we also know that $M$ is commutative
because the given component $D^0$ of the Mumford-Tate domain
has dimension zero. So $M$ is
a subgroup of the center of $H$, which is the torus $R_{F/\Q}T$,
where $T$ is the 1-dimensional
torus $\ker(N\colon R_{F_0/F}G_m\arrow G_m)$ over $F$.

So the endomorphism algebra of the $\Q$-Hodge structure
$V$ contains the centralizer
of $R_{F/\Q}T$ in $\End_{\Q}(V)$, which is the matrix
algebra $\End_{F_0}(V)\cong M_{mq^2}(F_0)$. Therefore, the $\Q$-Hodge
structure $V$ is a direct sum
$V=W^{\oplus mq^2}$ for a $\Q$-Hodge structure $W$ with endomorphisms
by $F_0$ such that $\dim_{F_0}(W)=1$. Thus we have constructed
enough endomorphisms to see again that $V$ is of CM type.
\end{proof}

\begin{remark}
As an addendum to Theorem \ref{main}, we can say when a CM Hodge
structure has more than the expected endomorphism algebra. For
CM abelian varieties, this was worked out by Shimura
\cite[Proposition 26]{Shimurabook}. Namely, let $X$ be a complex abelian
variety of dimension $g$ with a homomorphism $F_0\arrow \End(X)_{\Q}$
for a CM field $F_0$
of degree $2g$ over $\Q$. The isogeny type of $X$ is described by a
{\it CM type }on $F_0$,
meaning a set $\Phi$ of $g$ complex embeddings of $F_0$
such that every complex embedding is in $\Phi\cup \overline{\Phi}$.
Then $\End(X)_{\Q}$ is strictly larger than $F_0$ if and only if there
is a strictly smaller CM subfield $K_0$ of $F_0$, with subfield $K$
fixed by complex conjugation, such that any two elements of $\Sigma$
which agreee on $K$ also agree on $K_0$. When this happens, $X$
is isogenous to a power of the CM abelian variety with endomorphisms
by $K_0$ and CM type $\Sigma|_{K_0}$.

Essentially the same statement holds for CM Hodge structures of type
$(g,0,\ldots,0,g)$ of any weight $w\geq 1$. Namely, the CM Hodge
structures of type $(g,0,\ldots,0,g)$ with a homomorphism
$F_0\arrow \End_{\QHS}(V)$ for a CM field $F_0$ of degree $2g$
are classified by CM types on $F_0$, just as in weight 1.
(In particular, these Hodge structures are all polarizable.)
It follows that the equivalence of categories of Hodge structures
of type $(g,g)$ to Hodge structures of type $(g,0,\ldots,0,g)$
given by renaming $V^{1,0}\subset V\otimes_{\Q}\C$
as $V^{w,0}$ restricts to an equivalence from the CM Hodge structures
of weight 1 to those of weight $w$. Therefore, we have the same
criterion as in the previous paragraph for when a CM Hodge structure
of type $(g,0,\ldots,0,g)$ has more than the expected endomorphism
algebra.
\end{remark}

\section{Hodge structures not generated by curves}

In this section, we show that the $\Q$-Hodge structures considered
in this paper, those of weight at least 2 with Hodge numbers
$(n,0,\ldots,0,n)$, are not in the subcategory
generated by curves (or equivalently by abelian varieties),
except when they are of CM type.

Define the subcategory of $\Q$-Hodge structures {\it generated by curves}
to be the subcategory of Hodge structures generated
by $H^1$ of smooth complex projective curves together with the Hodge
structure $\Q(j)$ for integers $j$ by taking direct sums,
tensor products, and direct summands. This can also be described
as the subcategory generated by abelian varieties. Every
Hodge structure of CM type belongs to the subcategory generated by
curves. The Kuga-Satake construction shows that every polarizable
$\Q$-Hodge structure of type $(1,b,1)$ is in the subcategory
generated by curves \cite{Voisinkuga}.

We use the following result of Deligne's \cite[Lemma 5]{Schnell}.

\begin{theorem}
\label{deligne}
Let $V$ be a $\Q$-Hodge structure which belongs to the subcategory
generated by curves.
Then the Hodge structure on the Lie algebra of the Mumford-Tate
group of $V$ is of type $\{(-1,1),(0,0),(1,-1)\}$.
\end{theorem}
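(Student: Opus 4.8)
The plan is to reduce, via the Tannakian description of polarizable $\Q$-Hodge structures, to the single computation that $\End(H^1(A))$ is of Hodge type $\{(-1,1),(0,0),(1,-1)\}$ for an abelian variety $A$, and then to transport this bound along a surjection of Mumford-Tate groups. First I would record the general fact that for any polarizable $\Q$-Hodge structure $V$ the Lie algebra $\mt(V):=\mathrm{Lie}\,\MT(V)$ is in a natural way a $\Q$-sub-Hodge structure of $\End(V)$: the homomorphism $h\colon R_{\C/\R}G_m\arrow GL(V_{\R})$ defining the Hodge structure factors through $\MT(V)_{\R}$, so conjugation by $h(z)$ preserves $\MT(V)$ and hence $\mt(V)$, and since the weight cocharacter is central in $\MT(V)$ the subspace $\mt(V)$ lies in the weight-$0$ part of $\End(V)$ and carries the Hodge structure induced by $\mathrm{Ad}\circ h$. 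With this in hand the theorem asserts precisely that $\mt(V)^{a,b}=0$ unless $(a,b)\in\{(-1,1),(0,0),(1,-1)\}$.

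Next I would unwind the hypothesis. Since polarizable $\Q$-Hodge structures form a semisimple category, ``$V$ belongs to the subcategory generated by curves'' means that $V$ is a direct summand of a finite direct sum of Hodge structures of the form $H^1(C_{i_1})\otimes\cdots\otimes H^1(C_{i_k})\otimes\Q(j)$. Put $A=\prod_i\mathrm{Jac}(C_i)$, so that each $H^1(C_i)$ is a direct summand of $H^1(A,\Q)$, and set $W:=H^1(A,\Q)\oplus\Q(1)$. Then $V$ lies in the Tannakian subcategory $\langle W\rangle$ of polarizable $\Q$-Hodge structures generated by $W$ under $\oplus$, $\otimes$, duals, and subquotients: indeed $\Q(0)$ is the unit object, $\Q(j)\cong\Q(1)^{\otimes j}$ for $j\geq 1$, $\Q(-j)\cong\Q(j)^{\vee}$, and each $H^1(C_i)$ is a summand of $W$, so every tensor factor appearing above, and hence $V$ itself, lies in $\langle W\rangle$.

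Equipped with the forgetful fibre functor, $\langle W\rangle$ is equivalent to the category of finite-dimensional $\Q$-representations of $\MT(W)$. Under this equivalence $V$ corresponds to a representation of $\MT(W)$ whose image in $GL(V)$ is, by the very definition of the Mumford-Tate group, equal to $\MT(V)$; so we obtain a surjection $\pi\colon\MT(W)\surj\MT(V)$ of $\Q$-algebraic groups. Since $\pi$ carries the homomorphism $h_W\colon R_{\C/\R}G_m\arrow\MT(W)_{\R}$ to $h_V$, it intertwines the conjugation actions of $R_{\C/\R}G_m$ on the two groups, so the induced surjection $d\pi\colon\mt(W)\surj\mt(V)$ is a morphism of $\Q$-Hodge structures. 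It therefore suffices to bound the Hodge type of $\mt(W)$. Because $\MT(W)$ commutes with the (central) weight cocharacter of $W$, whose graded pieces are $H^1(A)$ and $\Q(1)$, it preserves this decomposition of $W$, and therefore $\mt(W)\subseteq\End(H^1(A))\oplus\End(\Q(1))$ inside $\End(W)$.

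Finally I would carry out the elementary computation: $\End(\Q(1))\cong\Q(0)$ is of type $(0,0)$, and $\End(H^1(A))\cong H^1(A)^{\vee}\otimes H^1(A)$ is of type $\{(-1,1),(0,0),(1,-1)\}$, since $H^1(A)$ is of type $\{(1,0),(0,1)\}$, hence $H^1(A)^{\vee}$ is of type $\{(-1,0),(0,-1)\}$, and the four sums of a weight of one factor with a weight of the other are $(0,0)$, $(-1,1)$, $(1,-1)$, $(0,0)$. A sub-Hodge structure, and a quotient Hodge structure, of a Hodge structure whose type is contained in $\{(-1,1),(0,0),(1,-1)\}$ again has this property; applying this first to $\mt(W)\subseteq\End(H^1(A))\oplus\End(\Q(1))$ and then to the quotient $\mt(V)=d\pi(\mt(W))$ completes the proof. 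The step I expect to need the most care is the Tannakian input that $\MT(V)$ is a quotient of $\MT(W)$ whenever $V\in\langle W\rangle$ — equivalently, that morphisms among objects of $\langle W\rangle$ are the same whether computed in the category of Hodge structures or in that of $\MT(W)$-representations — although this is by now standard.
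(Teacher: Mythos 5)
Your proof is correct. The paper does not actually prove this statement --- it quotes it as a result of Deligne's, citing Schnell's notes --- and your argument (realize $\mt(V)$ as a quotient Hodge structure of $\mt(W)$ via the surjection $\MT(W)\surj\MT(V)$ for $W=H^1(A,\Q)\oplus\Q(1)$, then bound the type of $\mt(W)\subseteq\End(H^1(A))\oplus\End(\Q(1))$ by direct computation) is essentially the standard proof given in that reference.
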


\begin{corollary}
\label{notcurve}
Let $V$ be a $\Q$-Hodge structure of weight $w\geq 2$
with Hodge numbers $(n,0,\ldots,0,n))$.
If $V$ is not of CM type, then it is not in the subcategory
generated by curves.
\end{corollary}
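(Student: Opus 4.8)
The plan is to combine Theorem \ref{deligne} with a Hodge-theoretic computation of the weights appearing in the Lie algebra of the Mumford-Tate group $M$ of such a $V$. The key point is that a $\Q$-Hodge structure of type $(n,0,\ldots,0,n)$ of weight $w$ puts a Hodge structure on $\End_{\Q}(V)$, hence on the Lie algebra $\mathfrak{m}$ of $M\subset GL(V)$; and because $V$ has no Hodge classes in intermediate degrees (the only Hodge numbers sit at the two ends, distance $w\geq 2$ apart), $\mathfrak{m}$ will in general contain vectors of Hodge type $(a,-a)$ with $|a|\geq 2$ unless $M$ is very small — in fact commutative, i.e. $V$ of CM type.

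Concretely, I would argue as follows. Suppose $V$ is \emph{not} of CM type, so $M$ is noncommutative; we must show $V$ is not generated by curves. Assume for contradiction that it is. By Theorem \ref{deligne}, the Hodge structure on $\mathfrak{m}\subset \End_{\Q}(V)=V\otimes V^{\vee}$ is of type $\{(-1,1),(0,0),(1,-1)\}$. Now the Hodge structure on $\End(V_{\C})=\bigoplus_{a,b}\Hom(V^{a,m-a},V^{b,m-b})$ (with $m=w$) has its $(p,-p)$-component equal to $\bigoplus_{b-a=p}\Hom(V^{a,w-a},V^{b,w-b})$. Since the only nonzero $V^{a,w-a}$ are at $a=0$ and $a=w$, the possible values of $b-a$ for nonzero summands are $0$ and $\pm w$. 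Thus the Hodge structure on all of $\End(V)$ — and a fortiori on the sub-Hodge-structure $\mathfrak{m}$ — is concentrated in types $(0,0)$, $(w,-w)$, $(-w,w)$. Combining with the constraint from Theorem \ref{deligne} and using $w\geq 2$, the types $(\pm w,\mp w)$ cannot occur, so $\mathfrak{m}$ is purely of type $(0,0)$. That means the homomorphism $R_{\C/\R}G_m\to GL(V_{\R})$ defining the Hodge structure maps into the centralizer of $M$, but since it also factors through $M$ by definition of the Mumford-Tate group, its image lands in the center of $M$. Hence $M$ equals the Zariski closure of a commutative group, so $M$ is commutative — contradicting the assumption that $V$ is not of CM type.

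The main obstacle, and the only place needing care, is the very last step: translating ``$\mathfrak{m}$ is of Hodge type $(0,0)$'' into ``$M$ is commutative.'' The cleanest route is to recall that the Hodge structure on $\mathfrak{m}$ is induced by the adjoint action composed with the cocharacter $h\colon R_{\C/\R}G_m\to M$, so $\mathfrak{m}^{p,-p}$ is the weight-$p$ space for the corresponding $G_m$-action via $h$ on $\mathfrak{m}_{\C}$; if all of $\mathfrak{m}$ has weight $0$, then $h$ centralizes $\mathfrak{m}$, hence centralizes $M^{\circ}=M$ (as $M$ is connected, being the Mumford-Tate group of a polarizable Hodge structure), so the image of $h$ is central in $M$. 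But the Mumford-Tate group is by definition generated by the image of $h$ under all of $M(\Q)$-conjugation... more simply, $M$ is the smallest $\Q$-algebraic subgroup whose real points contain $\im(h)$, and $\im(h)$ central in $M$ forces $M$ to be commutative. I would spell this out using \cite[section 1.3, 1.11]{Moonen99} rather than reprove it. A secondary small point: one should note $\mathfrak{m}$ really is a sub-Hodge-structure of $\End(V)$ — this is standard (the Mumford-Tate group's Lie algebra is the smallest $\Q$-sub-Lie-algebra of $\End(V)$ whose complexification is stable under $h$, hence in particular $h$-stable, hence a sub-Hodge-structure), and I would cite it rather than belabor it.

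\begin{proof}
Suppose $V$ is not of CM type; we must show $V$ is not in the subcategory generated by curves. Assume it is, and let $M$ be the Mumford-Tate group of $V$, with Lie algebra $\mathfrak{m}\subset \End_{\Q}(V)$. As $V$ is polarizable, $M$ is connected and reductive, and $\mathfrak{m}$ is a $\Q$-sub-Hodge-structure of $\End_{\Q}(V)$, with its Hodge structure induced by the adjoint action of $M$ on $\mathfrak{m}$ composed with the homomorphism $h\colon R_{\C/\R}G_m\arrow M\subset GL(V_{\R})$ defining the Hodge structure of $V$. Concretely, $\mathfrak{m}^{p,-p}=\mathfrak{m}_{\C}\cap \End(V_{\C})^{p,-p}$, where $\End(V_{\C})^{p,-p}=\bigoplus_{b-a=p}\Hom(V^{a,w-a},V^{b,w-b})$.

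Since $V$ has type $(n,0,\ldots,0,n)$, the only nonzero pieces of $V_{\C}$ are $V^{0,w}$ and $V^{w,0}$. Hence $\Hom(V^{a,w-a},V^{b,w-b})$ is nonzero only for $b-a\in\{0,w,-w\}$, so $\End(V_{\C})^{p,-p}=0$ unless $p\in\{0,w,-w\}$. Therefore the sub-Hodge-structure $\mathfrak{m}$ is concentrated in Hodge types $(0,0)$, $(w,-w)$, $(-w,w)$. On the other hand, by Theorem \ref{deligne}, since $V$ is generated by curves, the Hodge structure on $\mathfrak{m}$ is of type $\{(-1,1),(0,0),(1,-1)\}$. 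As $w\geq 2$, the only common Hodge type is $(0,0)$, so $\mathfrak{m}$ is purely of type $(0,0)$.

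Being purely of Hodge type $(0,0)$ means the $G_m$-action on $\mathfrak{m}_{\C}$ obtained from the adjoint action of $M$ composed with $h$ is trivial; equivalently, $\im(h)$ centralizes $\mathfrak{m}$, hence centralizes $M$ (as $M$ is connected). So $\im(h)$ lies in the center of $M$. But $M$ is by definition the smallest $\Q$-algebraic subgroup of $GL(V)$ whose group of real points contains $\im(h)$; since $\im(h)$ is contained in the commutative group $Z(M)(\R)$, and the Zariski closure of $\im(h)$ is contained in the commutative group $Z(M)$, minimality forces $M\subseteq Z(M)$, i.e. $M$ is commutative. This contradicts the hypothesis that $V$ is not of CM type.
\end{proof}
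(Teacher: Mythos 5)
Your proof is correct and follows essentially the same route as the paper: the Hodge structure on $\End_{\Q}(V)$ (hence on $\mt\subset\End_{\Q}(V)$) is concentrated in types $(0,0)$ and $(\pm w,\mp w)$, so Deligne's theorem forces $\mt$ to be purely of type $(0,0)$, which makes $\MT(V)$ commutative. The only cosmetic difference is at the last step, where the paper deduces commutativity from the Zariski density of the image of $h$ in $\MT(V)$ (so the adjoint representation is trivial), while you invoke minimality of $\MT(V)$ to land it inside its own center; the two arguments are equivalent.
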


\begin{proof}
The Hodge structure on $\End_{\Q}(V)$ is of type $\{(-n,n),(0,0),
(n,-n)\}$. The Lie algebra $\mt$ of the Mumford-Tate group $\MT(V)$
is a sub-Hodge structure of $\End_{\Q}(V)$. If 
the Hodge structure $V$ is in the subcategory generated by curves,
then in particular it is polarizable. Also,
by Theorem \ref{deligne},
$\mt$ is of type $\{(-1,1),(0,0),(1,-1)\}$, and so $\mt$ must
be of type $\{(0,0)\}$. Equivalently, the homomorphism
$R_{\C/\R}G_m\arrow \MT(V)_{\R}\arrow GL(\mt)$ that describes
the Hodge structure on $\mt$ is trivial.
Since $R_{\C/\R}G_m$ is Zariski dense in the Mumford-Tate group
as a $\Q$-group, it follows that the conjugation
homomorphism $\MT(V)\arrow GL(\mt)$ is trivial.
That is, the connected $\Q$-group $\MT(V)$ is commutative.
So $V$ is of CM type.
\end{proof}

\small \sc UCLA Mathematics Department, Box 951555,
Los Angeles, CA 90095-1555

totaro@math.ucla.edu

\begin{thebibliography}{99}
\bibitem{Abdulali} S.~Abdulali. Hodge structures of CM type.
{\it J.\ Ramanujan Math.\ Soc.\ }{\bf 20 }(2005), 155--162.

\bibitem{Beauville} A.~Beauville. Some surfaces with maximal
Picard number. \url{http://arxiv.org/abs/1310.3402}

\bibitem{Borel} A.~Borel. {\it Linear algebraic groups. }Springer (1991).

\bibitem{GGK} M.~Green, P.~Griffiths, and M.~Kerr.
{\it Mumford-Tate groups and domains. }Princeton (2012).

\bibitem{KMRT} M.-A.~Knus, A.~Merkurjev, M.~Rost,
and J.-P.~Tignol. {\it The book of involutions. }Amer.\
Math.\ Soc.\ (1998).

\bibitem{Moonen99} B.~Moonen. Notes on Mumford-Tate groups. Preprint (1999).
\url{http://www.math.ru.nl/personal/bmoonen/research.html}

\bibitem{Mumford} D.~Mumford. {\it Abelian varieties. }Oxford (1970).

\bibitem{Ribet} K.~Ribet. Galois representations attached
to eigenforms with Nebentypus. {\it Modular functions of one variable,
V }(Bonn, 1976). Lecture Notes in Mathematics 601, Springer (1977).

\bibitem{Schnell} C.~Schnell. Two lectures about Mumford-Tate
groups. {\it Rend.\ Semin.\ Mat.\ Univ.\ Politec.\ Torino }{\bf 69 }(2011),
199--216.

\bibitem{Scholl} A.~J.~Scholl. Motives for modular forms.
{\it Invent.\ Math.\ }{\bf 100 }(1990), 419--430.

\bibitem{Shimura} G.~Shimura. On analytic families
of polarized abelian varieties and automorphic functions.
{\it Ann.\ Math. }{\bf 78 }(1963), 149–2.

\bibitem{Shimurabook} G.~Shimura. {\it Abelian varieties
with complex multiplication and modular functions. }Princeton
(1998).

\bibitem{Stein} W.~Stein. The modular forms database.
\url{http://modular.math.washington.edu/Tables/ap_s3g1new_1-38.gp}

\bibitem{Voisinbook} C.~Voisin. {\it Hodge theory and complex
algebraic geometry I}. Cambridge (2002).

\bibitem{Voisinkuga} C.~Voisin. A generalization of the Kuga-Satake
construction. {\it Pure Appl.\ Math.\ Q.\ }{\bf 1 }(2005), 415--439.

\end{thebibliography}
\end{document}